\newtheorem{theorem}{Theorem}
\newtheorem{corollary}{Corollary}
\newtheorem{definition}{Definition}
\newtheorem{example}{Example}
\newtheorem{lemma}{Lemma}
\newtheorem{proposition}{Proposition}
\newtheorem{remark}{Remark}
\newenvironment{proof}[1][Proof]{\noindent\textbf{#1.} }{\ \rule{0.5em}{0.5em}}
\begin{document}

\title{\textbf{ Lieb, Entropy and Logarithmic uncertainty principles
 for The multivariate Continuous\\
  Quaternion Shearlet Transform}}
\author{ Brahim Kamel
\thanks{  Department of Mathematics, College of Science, University of Bisha, Bisha, Saudi Arabia.\newline E-mail:
kbrahim@ub.edu.sa}
\thanks{ Faculty of Sciences of Tunis. University of Tunis El Manar, Tunis, Tunisia.\newline  E-mail :
kamel.brahim@fst.utm.tn}
\qquad \& \quad Emna Tefjeni
\thanks{ Faculty of Sciences of Tunis. University of Tunis El Manar, Tunis, Tunisia.\newline  E-mail :
tefjeni.emna@outlook.fr}
\qquad \& \quad  Bochra Nefzi
\thanks{
 Department of Mathematics, College of Science and Arts - Tabarjal, Al Jouf University , Al Jouf,  Kingdom of Saudi Arabia.\newline  E-mail
:
bochra.nefzi@fst.utm.tn}}
\date{}
\maketitle
\begin{abstract}In this paper, we generalize the continuous quaternion shearlet transform on $\mathbb{R}^{2}$ to $\mathbb{R}^{2d}$, called the multivariate two sided continuous quaternion shearlet transform. Using the two sided quaternion Fourier transform, we derive several important properties such as (reconstruction formula, reproducing kernel, plancherel's formula, etc.). We present several example of the multivariate two sided continuous quaternion shearlet transform. We apply the multivariate two sided continuous quaternion shearlet transform properties and the two sided quaternion Fourier transform to establish Lieb uncertainty principle and the Logarithmic uncertainty principle. Last we study   the Beckner's uncertainty principle in term of entropy for the multivariate two sided continuous quaternion shearlet transform.
\end{abstract}
\vspace{2mm}
\noindent \textit{Keywords : Quaternion Fourier transform ; Shearlet ; The Continuous Quaternion Shearlet Transform ; Uncertainty Principle. }\\
\emph{2010 Mathematics Subject Classification: }42C40, 42A38, 42C15, 46S10,44A35.
\section{Introduction}
Uncertainty principles are mathematical results that give limitations on the simultaneous concentration of a function and its Euclidean Fourier transform. They have implications in two main areas: quantum physics and signal analysis. In quantum physics, they tell us that a particle's speed and position cannot both be measured with infinite precision. In signal analysis, they tell us that if we observe a signal only for a finite period of time, we will lose information about the frequencies the signal consists of. There are many ways to get the statement about concentration precise. For more details about uncertainty principles, we refer the reader to \cite{uncertainty1, uncertainty2}.\\
Labate et al. \cite{Labate} introduced the notion of shearlet transforms in the context of time frequency and multiscale analysis. Shearlets
have been applied in diverse areas of engineering and medical sciences, including inverse problems, computer tomography, image separation and restoration, image deconvolution and thresholding, and medical image analysis \cite{UP1,UP2,UP3,UP4,UP5}.
The quaternion algebra
offers a simple and profound representation of signals wherein several components are to be controlled simultaneously. The development
of integral transforms for quaternion valued signals has found numerous applications in 3D computer graphics, aerospace engineering, artificial intelligence and colour image processing.
Due to the non-commutativity of quaternion multiplication, different
types of integral transforms have been generalized to quaternion algebra including Fourier and wavelet transforms \cite{UP6,UP7}, windowed  transform \cite{uncertainty1,uncertainty2}, etc. Therefore, the main objective of this article is to introduce the concept of the quaternionic shearlet transform and investigate its different properties using the machinery of quaternion Fourier transforms and  convolution. Moreover, we drive some uncertainty principle for the multivariate continuous quaternion shearlet transforms.
\ \\ \ \\
For a quaternion function $f \in L^{2}(\mathbb{R}^{2d},\mathbb{H})$ and a non zero quaternion function $g \in L^{2}(\mathbb{R}^{2d},\mathbb{H})$ called a quaternion shearlet .
The aim of this paper is to  generalize the continuous quaternion shearlet transform on $\mathbb{R}^{2}$ to $\mathbb{R}^{2d}$, called the multivariate two sided continuous quaternion shearlet transform which has been started in \cite{article3}.\\
Our purpose in this work  is to prove the Lieb uncertainty principle for the multivariate continuous quaternion shearlet transform. We also prove  the Logarithmic uncertainty principle . Last we study the Beckner uncertainty principle in terme of entropy  for the multivariate two sided continuous quaternion shearlet transform.
Our paper is organized as follows: In section 2, we present basic notions and notations related to the quaternion Fourier transform. In section 3, we recall the definition and we provide some results for the two sided quaternion shearlet transform useful in the sequel. In section 4, we provide  some uncertaintly principles for the two-sided multivariate quaternion shearlet transform.

\section{Generalities:}
In this section, we recall some basic definitions and properties of the Quaternion Fourier transform. For more details, see \cite{pitt,quaternion,MY1,MY2}.\\
The quaternion algebra was formally introduced by the Irish mathematician W.R Hamilton in 1843, and it is a generalization of complex numbers. The quaternion algebra over $\mathbb{R}$, denoted by $\mathbb{H}$, is an associative non-commutative four-dimensional algebra,
\begin{center}
$\mathbb{H} = \{q = q_{r} + iq_{i} + jq_{j} + kq_{k} \hspace{0.1 cm}|\hspace{0.1 cm} q_{r}, q_{i}, q_{j}, q_{k} \in \mathbb{R}\},$
\end{center}
which obey Hamilton's multiplication rules
\begin{center}
$ij = - ji = k$,\hspace{0.2 cm} $jk = -kj = i$,\hspace{0.2 cm}  $ki = -ik = j$,\hspace{0.2 cm} $i^{2} = j^{2} = k^{2} = ijk = -1$.
\end{center}
The quaternion conjugate of a quaternion $q$ is given by
\begin{center}
$\overline{q} = q_{r} - iq_{i} - jq_{j} - kq_{k}$, \hspace{0.3 cm} $ q_{r}, q_{i}, q_{j}, q_{k} \in \mathbb{R}$.
\end{center}
The quaternion conjugation is a linear anti-involution
\begin{center}
$\overline{pq} = \overline{q}\ \overline{p}$, \hspace{0.3 cm} $\overline{p+q} = \overline{p}+\overline{q}$, \hspace{0.3 cm} $\overline{\overline{p}} = p$.
\end{center}
The modulus of a quaternion $q$ is defined by
\begin{center}
$|q| = \sqrt{q\overline{q}} = \sqrt{q_{r}^{2}+ q_{i}^{2}+q_{j}^{2}+ q_{k}^{2}}$.
\end{center}
It is not difficult to see that
\begin{center}
$|pq| = |p| |q|$, \hspace{0.3 cm} $\forall p,q \in\mathbb{H}$.
\end{center}
The real scalar part $q_{r}$
\begin{center}
$q_{r} = Sc(q)$.
\end{center}
If $1\leq p < \infty$, the $L^{p}$-norm of $f:\mathbb{R}^{d}\longrightarrow\mathbb{H}$ is defined by
\begin{equation}\label{normp}
\|f\|_{p} = \bigg(\displaystyle\int_{\mathbb{R}^{d}} |f(x)|^{p} dx\bigg)^{\frac{1}{p}}.
\end{equation}
For $p = \infty$, $L^{\infty}(\mathbb{R}^{d},\mathbb{H})$ is a collection of essentially bounded measurable functions with the norm
$$\|f\|_{\infty} = \mbox{ess} \displaystyle\sup_{x\in \mathbb{R}^{d}} |f(x)|.$$
If $f \in L^{\infty}(\mathbb{R}^{d},\mathbb{H})$ is continuous then
\begin{equation}\label{norminfty}
\|f\|_{\infty} = \displaystyle\sup_{x\in \mathbb{R}^{d}} |f(x)|.
\end{equation}
For $p=2$, we can define the quaternion-valued inner product
\begin{equation}\label{&}
\langle f,g\rangle_{Q} = \displaystyle\int_{\mathbb{R}^{d}} f(x) \overline{g(x)} dx
\end{equation}
with symmetric real scalar part
\begin{equation}\label{pro}
(f,g)_{Q}\  = \ \dfrac{1}{2}\big[\langle f,g\rangle_{Q} + \langle g,f\rangle_{Q}\big]\ = \ \displaystyle\int_{\mathbb{R}^{d}} Sc(f(x) \overline{g(x)})\ dx = \ \ Sc\bigg(\displaystyle\int_{\mathbb{R}^{d}} f(x) \overline{g(x)}\ dx\bigg).
\end{equation}
Both (\ref{&}) and (\ref{pro}) lead to the $L^{2}(\mathbb{R}^{d},\mathbb{H})$-norm
\begin{equation}
\|f\|_{2,d} = \sqrt{(f,f)_{Q}} = \sqrt{\langle f,f\rangle_{Q}}  = \bigg(\displaystyle\int_{\mathbb{R}^{d}} |f(x)|^{2} dx\bigg)^{\frac{1}{2}}.
\end{equation}
As a consequence of the inner product (\ref{&}) we obtain the quaternion Cauchy-Schwartz inequality
\begin{equation}\label{cs}
\bigg|\displaystyle\int_{\mathbb{R}^{d}} f(x)\overline{g(x)} dx\bigg| \leq \bigg(\displaystyle\int_{\mathbb{R}^{d}} |f(x)|^{2}dx\bigg)^{\frac{1}{2}}\bigg(\displaystyle\int_{\mathbb{R}^{d}} |g(x)|^{2}dx\bigg)^{\frac{1}{2}}\hspace{0.2 cm} \forall f , g \in L^{2}(\mathbb{R}^{d},\mathbb{H}).
\end{equation}
\begin{definition} \cite{14}
The Quaternion Fourier transform of a function $f\in L^1(\mathbb{R}^{2d},\mathbb{H})\cap L^2(\mathbb{R}^{2d},\mathbb{H})$ is defined as
$$\mathcal{F}_{Q}(f)(u,v) := \int_{\mathbb{R}^{2d}}e^{-2i\pi u. x} f(x,y) e^{-2j\pi v. y} dx\ dy.$$
and it satisfies Plancherel's formula $\|\mathcal{F}_{Q}(f)\|_{2}=\|f\|_{2}.$ As a consequence $\mathcal{F}_{Q}$ extends to a
unitary operator on $L^2(\mathbb{R}^{2d},\mathbb{H})$ and satisfies Parseval's formula:
$$( f, g )_{Q} = ( \mathcal{F}_{Q}(f), \mathcal{F}_{Q}(g) )_{Q},\ \ \ \forall \ f,\ g\in L^2(\mathbb{R}^{2d},\mathbb{H}).$$
The inverse Quaternion Fourier transform of a function $f\in L^1(\mathbb{R}^{2d},\mathbb{H})$ is given as
$$\mathcal{F}_{Q}^{-1}( f )(x,y) = \mathcal{F}_{Q}( f )(-x,-y).$$
Thus, if $f \in L^1(\mathbb{R}^{2d},\mathbb{H})$ with $\mathcal{F}_{Q}(f)\in L^1(\mathbb{R}^{2d},\mathbb{H})$, then
$$f(x,y)=\int_{\mathbb{R}^{2d}}e^{2i\pi u. x} \mathcal{F}_{Q}(f)(u,v)\ e^{2j\pi v. y} \ du\ dv.$$
\end{definition}
\begin{definition}
The convolution of two functions $f,\ g \in L^1(\mathbb{R}^{2d},\mathbb{H})$ is the function $f*g$ defined by
$$(f * g)(x)=\int_{\mathbb{R}^{2d}}f(t)g(x-t)\ dt,\ \ \ x\in \mathbb{R}^{2d}.$$
\end{definition}
We have the following Theorem \ref{conditions}. With $\mathbb{R}^{2n}$ replacing $\mathbb{R}^{2}$ in \cite{article3} we get the results, we will not repeat the proof here.
\begin{theorem}\label{conditions}
Let $f$,\ $g$ be two quaternion functions and if we assume that
\begin{eqnarray}\label{hyp}
\mathcal{F}_{Q}(g)(u,v) e^{-2j\pi v.t} = e^{-2j\pi v.t} \mathcal{F}_{Q}(g)(u,v) \ \ \ and \ \ \ \mathcal{F}_{Q}(jg)(u,v) e^{-2j\pi v.t} =j  e^{-2j\pi v.t} \mathcal{F}_{Q}(g)(u,v); \ \ t \in \mathbb{R}^{d}
\end{eqnarray}
then the QFT of the convolution of $f\in L^{2}(\mathbb{R}^{2},\mathbb{H})$ and $g\in L^{2}(\mathbb{R}^{2},\mathbb{H})$ is given as
\begin{eqnarray}\label{conv}
\mathcal{F}_{Q}(f*g)(w) =  \mathcal{F}_{Q}(f)(w)\mathcal{F}_{Q}(g)(w)
\end{eqnarray}
\end{theorem}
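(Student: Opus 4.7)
The plan is to expand $\mathcal{F}_{Q}(f*g)(u,v)$ by definitions, apply Fubini, substitute $x\mapsto x+s$, $y\mapsto y+t$ in the $(x,y)$-integral, and then invoke the commutation hypothesis (\ref{hyp}) to isolate $\mathcal{F}_{Q}(g)(u,v)$ as a right factor. After the change of variables, the integrand becomes
$$e^{-2i\pi u.x}\,e^{-2i\pi u.s}\,f(s,t)\,g(x,y)\,e^{-2j\pi v.t}\,e^{-2j\pi v.y},$$
in which the shifted exponentials have split cleanly because $e^{-2i\pi u.x}$ commutes with $e^{-2i\pi u.s}$ (both in the $i$-plane), and likewise $e^{-2j\pi v.t}$ with $e^{-2j\pi v.y}$.

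The real obstacle is the non-commutativity of the $i$- and $j$-exponentials across the quaternion-valued $f$ and $g$, so I would decompose $f=f_{1}+f_{2}\,j$ with $f_{1},f_{2}$ valued in the $\mathbb{R}$-span of $1$ and $i$, and handle the two summands separately. For the $f_{1}$ piece, $f_{1}(s,t)$ commutes with both $i$-exponentials; performing the $(x,y)$-integral first (and using that the two $j$-exponentials commute, so that $e^{-2j\pi v.t}$ slips past $e^{-2j\pi v.y}$) produces $\mathcal{F}_{Q}(g)(u,v)\,e^{-2j\pi v.t}$ to the right of $f_{1}(s,t)\,e^{-2i\pi u.s}$; the first relation in (\ref{hyp}) then swaps $\mathcal{F}_{Q}(g)(u,v)$ past $e^{-2j\pi v.t}$, and the remaining $(s,t)$-integral collapses to $\mathcal{F}_{Q}(f_{1})(u,v)$ on the left of $\mathcal{F}_{Q}(g)(u,v)$. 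For the $f_{2}\,j$ piece, the analogous manipulation produces $\mathcal{F}_{Q}(jg)(u,v)\,e^{-2j\pi v.t}$ in the middle; the second relation in (\ref{hyp}) is tailored precisely to convert this into $j\,e^{-2j\pi v.t}\,\mathcal{F}_{Q}(g)(u,v)$, after which the identity $e^{-2i\pi u.s}\,j=j\,e^{2i\pi u.s}$ together with the remaining $(s,t)$-integration recovers $\mathcal{F}_{Q}(f_{2}\,j)(u,v)\,\mathcal{F}_{Q}(g)(u,v)$. Summing the two pieces yields $\mathcal{F}_{Q}(f)(u,v)\,\mathcal{F}_{Q}(g)(u,v)$.

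The hard part is the bookkeeping of quaternion orderings: since $j$ anticommutes with $i$, multiplication by $j$ flips the sign inside any $i$-exponential, and one cannot interchange factors freely. The two parts of hypothesis (\ref{hyp}) are engineered as exactly the two swap relations needed to push $\mathcal{F}_{Q}(g)(u,v)$ through the residual $e^{-2j\pi v.t}$ for the $f_{1}$ and $f_{2}\,j$ summands of the decomposition of $f$, respectively; everything else is Fubini, substitution, and the splitting of same-axis exponentials.
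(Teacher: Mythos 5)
Your argument is correct, but note that the paper itself never proves Theorem \ref{conditions}: it simply invokes the result from \cite{article3} (with $\mathbb{R}^{2n}$ in place of $\mathbb{R}^{2}$), and the remark after the theorem points to \cite{bahri} for a proof under related assumptions. Judged on its own, your proof is exactly the standard argument those references use, and the bookkeeping is right: after Fubini and the shift $x\mapsto x+s$, $y\mapsto y+t$ the integrand is $e^{-2i\pi u.x}e^{-2i\pi u.s}f(s,t)g(x,y)e^{-2j\pi v.t}e^{-2j\pi v.y}$; the symplectic splitting $f=f_{1}+f_{2}j$ with $f_{1},f_{2}$ valued in $\operatorname{span}\{1,i\}$ lets you slide $e^{-2i\pi u.s}f_{1}(s,t)$ (resp.\ $e^{-2i\pi u.s}f_{2}(s,t)$) past $e^{-2i\pi u.x}$, the inner $(x,y)$-integral then yields $\mathcal{F}_{Q}(g)(u,v)$ (resp.\ $\mathcal{F}_{Q}(jg)(u,v)$) sitting to the left of $e^{-2j\pi v.t}$, and the two halves of hypothesis (\ref{hyp}) are precisely the commutation rules needed to move that factor to the right, so the outer $(s,t)$-integral reproduces $\mathcal{F}_{Q}(f_{1})(u,v)\mathcal{F}_{Q}(g)(u,v)$ and $\mathcal{F}_{Q}(f_{2}j)(u,v)\mathcal{F}_{Q}(g)(u,v)$. (The identity $e^{-2i\pi u.s}j=je^{2i\pi u.s}$ you invoke at the end is not actually needed: once the second hypothesis is applied, the $(s,t)$-integrand is already $e^{-2i\pi u.s}f_{2}(s,t)\,j\,e^{-2j\pi v.t}\mathcal{F}_{Q}(g)(u,v)$, which is by definition the integrand of $\mathcal{F}_{Q}(f_{2}j)$ times a constant right factor.) The only gap, shared with the paper, is integrability: for $f,g$ merely in $L^{2}$ the defining integrals and the Fubini step are formal, so strictly one should argue first for $f,g\in L^{1}\cap L^{2}$ (or use the $L^{2}$ identity (\ref{theorem}) stated afterwards) and extend by density; this is the same level of rigor at which the theorem is stated and used in the paper.
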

\begin{remark}
\mbox{}
\begin{enumerate}
\item
Let $g(x,y) = g_{1}(x,y) + j g_{2}(x,y)$  where $g_{1}$ and $g_{2}$ in $L^{2}(\mathbb{R}^{2d},\mathbb{R})$ and $g(-x, y) = g(x, y)$ we have $g$ satisfies the condition (\ref{hyp}).
\item
 In \cite{bahri}, the authors gave a demonstration of the property by considering some other assumptions.
\end{enumerate}
\end{remark}
 \ \\
For all $f \in L^2(\mathbb{R}^{2d},\mathbb{H})$  and $g$ satisfies the condition of theorem \ref{conditions}, we have
$$f * g=\mathcal{F}_{Q}^{-1}( \mathcal{F}_{Q}(f)\cdot\mathcal{F}_{Q}(g) ).$$
Thus, if $f \in L^2(\mathbb{R}^{2d},\mathbb{H})$  and $g$ satisfies the condition of theorem \ref{conditions}, the function $f * g$ belongs to $L^2(\mathbb{R}^{2d},\mathbb{H})$ if and only if
$\mathcal{F}_{Q}(f)\cdot\mathcal{F}_{Q}(g)$ belongs to $L^2(\mathbb{R}^{2d},\mathbb{H})$, and in this case, we have
$$\mathcal{F}_{Q}(f * g)=\mathcal{F}_{Q}(f)\cdot\mathcal{F}_{Q}(g).$$
Then, for all $f \in L^2(\mathbb{R}^{2d},\mathbb{H})$  and $g$ satisfies the condition of theorem \ref{conditions}, we have
\begin{eqnarray}\label{theorem}
\int_{\mathbb{R}^{2d}}|f * g(x)|^2 d\ x=\int_{\mathbb{R}^{2d}}|\mathcal{F}_{Q}(f)(\xi)|^2|\mathcal{F}_{Q}(g)(\xi)|^2 d\xi,
\end{eqnarray}
where both sides are finite or both sides are infinite.
\section{Multivariate  continuous Quaternion Shearlet Transform:}
The Shearlet transform was originally developed in the inaugural paper \cite{lAU} and became a very useful mathematical tool, which has been widely used in characterization of function spaces as well as in signal and image processing, see \cite{dahlkesh1aa, dahlkesh, shguo1, shguo1&, kutla&, kutla&&, kutla, sliu}.\\
For more details on shearlet transform, the reader can see \cite{dahlkesh,fitouhi,shguo}.\\ \ \\
In this section, we introduce the Multivariate  continuous quaternion shearlet transform and we establish some new results (Parseval formula, inversion formula,etc.).\\
Let $I_{2n}$ denote the $(2n, 2n)$-identity matrix and $0_{2n}$, resp. $1_{2n}$ the vectors with $2n$ entries $0$, resp. $1$. For $a \in\mathbb{R}^* := \mathbb{R}\backslash \{0\}$ and $s\in\mathbb{R}^{2n-1}$, we set
$$
A_a :=\left(
   \begin{array}{ccc}
     a &  &0_{2n-1}^T\\
     \\
     0_{2n-1} &  &sg(a)|a|^{\frac{1}{2n}}\  I_{2n-1}
   \end{array}
 \right)\ \ \ \mbox{and}\ \ \ S_s :=\left(
   \begin{array}{ccc}
     1 &  &s^T\\
     \\
     0_{2n-1} &  &I_{2n-1}
   \end{array}
 \right),
$$
where $s^T=(s_1 ,..., s_{2n-1})$.
\begin{remark} Let $a,b \in\mathbb{R}^*$ and $s\in\mathbb{R}^{2n-1}$. Then, we have
\begin{enumerate}
\item $A_{a}^{-1}=A_{a^{-1}}$,\ \ $A_{a}^{T}=A_{a}$, \ $A_{a}A_{b}=A_{ab}$ \ \mbox{and}\ $|\det{A_{a}}| = |a|^{2-\frac{1}{2n}}$.
\item \begin{equation}\label{sh1}
S_s^{-1}=\left(
   \begin{array}{ccc}
     1 &  &-s^T\\
     \\
     0_{2n-1} &  &I_{2n-1}
   \end{array}
 \right) \ \ \mbox{and}\ \ \ S_{s}^{T}=\left(
   \begin{array}{ccc}
     1 &  &0_{2n-1}\\
     \\
     s &  &I_{2n-1}
   \end{array}
 \right).
 \end{equation}
 \item \begin{equation}S_{s}A_{a}=\left(
   \begin{array}{ccccc}
     a & sg(a)|a|^{\frac{1}{2n}}s_1& sg(a)|a|^{\frac{1}{2n}}s_2 &\ldots &sg(a)|a|^{\frac{1}{2n}}s_{2n-1}\\
     & & & &\\
      0 & sg(a)|a|^{\frac{1}{2n}}&0&\ldots &\ldots\\
       & & & &\\
      \vdots & \ddots&\ddots&\ddots &\vdots\\
       & & & &\\
      \vdots & \ddots& \ddots&\ddots&\ddots\\
       & & & &\\
     \vdots& \ldots & \ddots&\ddots&\ddots
   \end{array}
 \right).\end{equation}
  \item
\begin{equation}
 |\det{S_{s}A_{a}}| = |a|^{2-\frac{1}{2n}}.
\end{equation}
 \item
 \begin{equation}\label{rechang}
 \displaystyle A_{a}^{T}\ S_{s}^{T} \lambda=\Big(a\lambda_1 ,\  sg(a)|a|^{\frac{1}{2n}}(s_{1}\lambda_{1}+\lambda_{2}), ...,\  sg(a)|a|^{\frac{1}{2n}}(s_{2n-1}\lambda_{1}+\lambda_{2n})\Big),
 \end{equation} for every $\lambda=(\lambda_{1},..., \lambda_{2n})^{T}\in\mathbb{R}^{2n}.$

\end{enumerate}
\end{remark}
The set $\mathbb{R}^*\times\mathbb{R}^{2n-1}\times\mathbb{R}^{2n}$ endowed with the operation
 \begin{equation}
 (a, s, t)\circ(a', s', t')= (aa',\ s+|a|^{1-\frac{1}{2n}}s',\ t+S_{s}A_{a}t'), \end{equation}
 is a locally compact group $\mathcal{S}$ which we call full Shearlet group. The left and right Haar measures on $\mathcal{S}$ are given by
$$d\mu_{l}(a, s, t)=\frac{1}{|a|^{2n+1}}da\ ds\ dt\ \ \ \mbox{and}\ \ \ d\mu_{r}(a, s, t)=\frac{1}{|a|}da\ ds\ dt.$$
We defined by $\mu_{l}$ the normalized Lebesgue measure on $\mathbb{R}^{*}\times\mathbb{R}^{2d-1}\times\mathbb{R}^{2d}$ .

Notice that the Shearlet group is isomorphic to the locally compact group $\mathcal{G}\times\mathbb{R}^{2n}$, where
 $$\mathcal{G}=\{S_{s}A_{a} : a\in \mathbb{R}^*,\ s\in\mathbb{R}^{2n-1}\}.$$
 \begin{remark} For all $(a,s,t),(a',s',t')\in \mathbb{R}^*\times\mathbb{R}^{2n-1}\times\mathbb{R}^{2n}$, we have
\begin{equation}\label{ssaat} S_{s}A_{a}S_{s'}A_{a'} = S_{s+|a|^{1-\frac{1}{2n}}s'}A_{aa'}.\end{equation}
In particular, if $s'=-|a|^{\frac{1}{2n}-1}s$ and $a'=a^{-1}$, then
\begin{equation}\label{ssaat*}S_{s}A_{a}S_{-|a|^{\frac{1}{2n}-1}s}A_{a^{-1}}=I_{2n}.
\end{equation}
By the previous relations it follows that $e := (1, 0_{2n-1}, 0_{2n})$ is the neutral element in $\mathcal{S}$ and
that the inverse of $(a, s, t)$ is given by
$$(a, s, t)^{-1}=(a^{-1},\ -|a|^{\frac{1}{2n}-1}s,\ -A_{a}^{-1}S_{s}^{-1}t).$$
 \end{remark}
In the following, we use only the left Haar measure and use the abbreviation $d\mu=d\mu_{l}$. We denote by $L^p(d\mu)$,\ $p\in[1,+\infty]$, the Lebesgue space formed by the measurable functions $f$ on $\mathbb{R}^*\times\mathbb{R}^{2n-1}\times\mathbb{R}^{2n}$ such that $\|f\|_{p,\mu}<\infty$, with

\begin{eqnarray*}\|f\|_{p,\mu}=\left \{
\begin{aligned}&\left(\int_{\mathbb{R}}\int_{\mathbb{R}^{2n-1}}\int_{\mathbb{R}^{2n}}|f(a,s,t)|^{p}d\mu(a,s,t)\right)^{\frac{1}{p}},
&\mbox{if}& &1\leq p <+\infty\\
\\
&\sup_{(a,s,t)\in \mathbb{R}^*\times \mathbb{R}^{2n-1}\times\mathbb{R}^{2n}}|f(a,s,t)| ,&\mbox{if}&  &p =+\infty
\end{aligned}
  \right..
\end{eqnarray*}
Let $f$ and $g \in L^{2}(d\mu)$, then the inner product in $L^{2}(d\mu)$ is given by
  \begin{center}
  $\langle f,g\rangle _{\mu} = \displaystyle\int_{\mathbb{R}}\int_{\mathbb{R}^{2n-1}}\int_{\mathbb{R}^{2n}} Sc\big( f(a,s,t)\overline{g(a,s,t)}\big) d\mu(a,s,t) = \displaystyle\int_{\mathbb{R}}\int_{\mathbb{R}^{2n-1}} (f(a,s,.),g(a,s,.))_{Q} \dfrac{da}{|a|^{2n+1}} \ ds$.
  \end{center}
\begin{definition}\label{defsh} Let $a \in\mathbb{R}^* ,\ s\in\mathbb{R}^{2n-1}$ and $t\in\mathbb{R}^{2n}$.
 The dilation operator $\pi_{a,s,t}$ is defined for every function $\psi \in L^2(\mathbb{R}^{2n},\mathbb{H})$ by
 \begin{equation}\label{psiast}
 \pi_{a, s, t}(\psi)(x) = \psi_{a,s,t}(x) := |a|^{\frac{1}{4n}-1}\psi(A_{a}^{-1}S_{s}^{-1}(x - t)).\end{equation}
 These operators satisfy:
 \begin{enumerate}
\item For all $(a,s,t),(a',s',t')\in \mathbb{R}^*\times\mathbb{R}^{2n-1}\times\mathbb{R}^{2n}$, we have
\begin{equation}
 \pi_{a, s, t}\circ \pi_{a', s', t'}= \pi_{aa',\ s+|a|^{1-\frac{1}{2n}}s',\ t+S_{s}A_{a}t'} .
 \end{equation}
 \item For every $\psi \in L^2(\mathbb{R}^{2n},\mathbb{H})$,
 \begin{equation}
 \pi_{1, 0_{2n-1}, 0_{2n}}(\psi)= \psi.
 \end{equation}
\end{enumerate}
 \end{definition}
\begin{proposition}
\mbox{}\\
\begin{enumerate}
\item
Let $(a,s,t)\in \mathbb{R}^*\times\mathbb{R}^{2n-1}\times\mathbb{R}^{2n}$, then for all $\varphi, \psi \in L^2(\mathbb{R}^{2n},\mathbb{H})$, we have\\
    \ \\
  (i)
$$\displaystyle
 \int_{\mathbb{R}^{2n}}\psi_{a, s, t}(x)\varphi(x)\ dx = \int_{\mathbb{R}^{2n}}\psi(x)\varphi_{a^{-1}, -s|a|^{\frac{1}{2n}-1}, -A_{a}^{-1}S_{s}^{-1}t}(x)\ dx.$$
(ii)
$$\mathcal{F}_{Q}(\psi_{a,s,t})(\lambda) = |a|^{1-\frac{1}{4n}} \bigg(\displaystyle\prod_{p=1}^{n}e^{-2i\pi t_{p}\lambda_{p}} \bigg)  \mathcal{F}_{Q}(\psi)(A_{a}^{T}S_{s}^{T}\lambda)\bigg(\displaystyle\prod_{q=n+1}^{2n}e^{-2j\pi t_{q}\lambda_{q}} \bigg).$$
 \item Let $p\geq 1$. For every $\psi \in L^p(\mathbb{R}^{2n},\mathbb{H})$ and $(a,s,t)\in \mathbb{R}^*\times\mathbb{R}^{2n-1}\times\mathbb{R}^{2n}$, the function $\psi_{a, s, t}$ belongs to $L^p(\mathbb{R}^{2n},\mathbb{H})$ and we have
\begin{eqnarray}\label{psino}
\|\psi_{a, s, t}\|_p= |a|^{(\frac{1}{4n}-1)\frac{p-2}{p}}\|\psi\|_p.
\end{eqnarray}
In particular, $\pi_{a, s, t}$ is an isometric isomorphism from $L^2(\mathbb{R}^{2n},\mathbb{H})$ onto itself whose inverse operator is $\pi_{a^{-1}, -s|a|^{\frac{1}{2n}-1}, -A_{a}^{-1}S_{s}^{-1}t}$.
\end{enumerate}
\end{proposition}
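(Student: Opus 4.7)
The plan is to reduce all three assertions to a single change of variables, $y=A_a^{-1}S_s^{-1}(x-t)$ (equivalently $x=S_sA_ay+t$), whose Jacobian has absolute value $|\det(S_sA_a)|=|a|^{2-1/(2n)}$, and to combine it with the group law of $\mathcal{S}$ recorded in Definition \ref{defsh}.

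For part (i) I would substitute $x=S_sA_ay+t$ in the left-hand side, combine the prefactor $|a|^{1/(4n)-1}$ with the Jacobian $|a|^{2-1/(2n)}$ to get $|a|^{1-1/(4n)}$, and arrive at $|a|^{1-1/(4n)}\int\psi(y)\varphi(S_sA_ay+t)\,dy$. In parallel I would expand $\varphi_{a^{-1},-s|a|^{1/(2n)-1},-A_a^{-1}S_s^{-1}t}(y)$ directly from Definition \ref{defsh}; using the special instance $A_aS_{s|a|^{1/(2n)-1}}=S_sA_a$ of the product rule (\ref{ssaat}) (equivalently (\ref{ssaat*})), this expression simplifies to $|a|^{1-1/(4n)}\varphi(S_sA_ay+t)$, and the two integrals agree. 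For part (ii) I split $\lambda=(u,v)$ and $t=(t^{(1)},t^{(2)})$ with $u,v,t^{(1)},t^{(2)}\in\mathbb{R}^n$. The same substitution turns $\mathcal{F}_Q(\psi_{a,s,t})(\lambda)$ into
\[
|a|^{1-1/(4n)}\!\int\! e^{-2i\pi u\cdot t^{(1)}}\,e^{-2i\pi u\cdot(S_sA_ay)^{(1)}}\,\psi(y)\,e^{-2j\pi v\cdot(S_sA_ay)^{(2)}}\,e^{-2j\pi v\cdot t^{(2)}}\,dy.
\]
The scalar $e^{-2i\pi u\cdot t^{(1)}}$ lies in the commutative subfield of $\mathbb{H}$ generated by $1$ and $i$, so it commutes past every $i$-exponential and can be pulled out of the integral on the far left; likewise $e^{-2j\pi v\cdot t^{(2)}}$ comes out on the far right. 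I then invoke identity (\ref{rechang}) to identify the remaining integral with $\mathcal{F}_Q(\psi)(A_a^TS_s^T\lambda)$, yielding (ii).

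For part (2), the $L^p$-norm identity follows immediately from the change of variables: $\|\psi_{a,s,t}\|_p^p=|a|^{(1/(4n)-1)p+2-1/(2n)}\|\psi\|_p^p$, and a short arithmetic check rewrites the exponent as $p\cdot(1/(4n)-1)(p-2)/p$. In particular $p=2$ collapses the exponent to zero, so $\pi_{a,s,t}$ is an $L^2$-isometry. Combined with Definition \ref{defsh}(1), which gives $\pi_{a,s,t}\circ\pi_{a^{-1},-s|a|^{1/(2n)-1},-A_a^{-1}S_s^{-1}t}=\pi_{1,0_{2n-1},0_{2n}}=\mathrm{Id}$, this identifies the inverse operator and completes the proof that $\pi_{a,s,t}$ is an isometric isomorphism of $L^2(\mathbb{R}^{2n},\mathbb{H})$.

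The main obstacle I anticipate is the intermediate step of (ii). While (\ref{rechang}) cleanly identifies the total bilinear form $\lambda\cdot(S_sA_ay)$ as $\mu^{(1)}\cdot y^{(1)}+\mu^{(2)}\cdot y^{(2)}$ with $\mu=A_a^TS_s^T\lambda$, the shear $S_s$ couples the first coordinate to all others, so the individual pieces $u\cdot(S_sA_ay)^{(1)}$ and $v\cdot(S_sA_ay)^{(2)}$ each pick up a cross term involving $y^{(2)}$. Matching the quaternionic integrand with the definition of $\mathcal{F}_Q(\psi)$ therefore requires transferring these cross terms across $\psi(y)$, which is only legitimate if $\psi$ satisfies an $i/j$-commutation condition analogous to (\ref{hyp}); the remainder of the argument is routine Jacobian and group-law bookkeeping.
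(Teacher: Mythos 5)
Your handling of (i) and of part 2 is correct and complete: the substitution $x=S_{s}A_{a}y+t$ with Jacobian $|a|^{2-\frac{1}{2n}}$, the identity $A_{a}S_{s|a|^{\frac{1}{2n}-1}}=S_{s}A_{a}$ extracted from (\ref{ssaat}), the group law of Definition \ref{defsh}, and the exponent arithmetic all check out (for what it is worth, the paper states this proposition without giving any proof at all, so there is no "official" argument to compare against).

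The genuine gap is in (ii), and it is exactly the point you postpone to your closing caveat; it is not "routine bookkeeping pending a commutation condition", it is the whole difficulty, and as stated the identity cannot be proved because it is false for general $\psi\in L^{2}(\mathbb{R}^{2n},\mathbb{H})$. After the substitution, the first exponential is $e^{-2i\pi u\cdot(S_{s}A_{a}y)^{(1)}}$, and since $(S_{s}A_{a}y)_{1}=ay_{1}+sg(a)|a|^{\frac{1}{2n}}\sum_{k=2}^{2n}s_{k-1}y_{k}$ involves $y_{n+1},\dots,y_{2n}$, a cross phase $e^{-2i\pi \lambda_{1}sg(a)|a|^{\frac{1}{2n}}\sum_{k=n+1}^{2n}s_{k-1}y_{k}}$ sits to the left of $\psi(y)$, whereas in $\mathcal{F}_{Q}(\psi)(A_{a}^{T}S_{s}^{T}\lambda)$, computed via (\ref{rechang}), the same phase must occur as a $j$-exponential to the right of $\psi(y)$. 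This $i\to j$ transfer across $\psi$ is illegitimate in $\mathbb{H}$, and the failure survives integration: take $n=1$, $t=0$, $s\neq 0$, $\lambda_{1}\lambda_{2}\neq 0$ and $\psi(y)=h(y_{1})k(y_{2})$ with $h,k$ real Gaussians ($k$ even). Writing $c=\lambda_{1}sg(a)|a|^{1/2}s$ and $b=\lambda_{2}sg(a)|a|^{1/2}$, the two sides of (ii) differ by $|a|^{1-\frac{1}{4n}}\big(\int_{\mathbb{R}}e^{-2i\pi a\lambda_{1}y_{1}}h(y_{1})\,dy_{1}\big)(1+ij)\int_{\mathbb{R}}k(y)\sin(2\pi cy)\sin(2\pi by)\,dy$, which is nonzero since $\int k\sin(2\pi cy)\sin(2\pi by)\,dy=\tfrac12[\widehat{k}(c-b)-\widehat{k}(c+b)]\neq 0$ for a Gaussian. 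So (ii) needs an additional hypothesis on $\psi$ (a structural/commutation assumption in the spirit of (\ref{hyp}) and Theorem \ref{conditions}, or a parity-plus-subalgebra condition) together with an actual verification that this hypothesis lets you move the cross phase across $\psi$; note that condition (\ref{hyp}) as literally written, concerning commutation of $\mathcal{F}_{Q}(g)$ with $e^{-2j\pi v\cdot t}$, does not by itself justify the transfer either. Until that hypothesis is supplied and used, your proof establishes (i) and part 2 but not (ii) as stated — and since (\ref{fousht}) and everything downstream rests on (ii), this is the one step that genuinely has to be repaired rather than smoothed over.
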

\begin{definition} A function $\psi$ in $ L^2(\mathbb{R}^{2n},\mathbb{H})$ is admissible quaternion shearlet if and only if it fulfills the admissibility condition
\begin{eqnarray}
C_{\psi}=\int_{\mathbb{R}^{2n}}\frac{|\mathcal{F}_{Q}(\psi^{*})(\lambda_{1},\tilde{\lambda})|^2}{|\lambda_1|^{2n}} d\lambda <\infty.
\end{eqnarray}
where $\tilde{\lambda}= (\lambda_{2},\lambda_{3},...,\lambda_{2n}),$ and $\psi^{*}(x) = \check{\overline{\psi(x)}} =  \overline{\psi(-x)}$.
\end{definition}
\begin{lemma} Let $\psi$ be an admissible quaternion shearlet in $L^2(\mathbb{R}^{2n}, \mathbb{H})$. Then
\begin{eqnarray}\label{cpsish}
C_{\psi}=\int_{\mathbb{R}}\int_{\mathbb{R}^{2n-1}}\frac{\Big|\mathcal{F}_{Q}(\psi^{*})(A_{a}^{T}S_{s}^{T}\lambda)\Big|^2}{|a|^{\frac{4n^{2}-2n+1}{2n}}}da\ ds,
\end{eqnarray}
is satisfied.
\end{lemma}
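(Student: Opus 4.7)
The plan is to start from the right-hand side of (\ref{cpsish}) and reduce it to the defining integral of $C_{\psi}$ by a single change of variables. Fix any $\lambda=(\lambda_1,\tilde{\lambda})\in\mathbb{R}^{2n}$ with $\lambda_1\neq 0$ and consider the map
$$(a,s)\in\mathbb{R}^{*}\times\mathbb{R}^{2n-1}\ \longmapsto\ \mu := A_{a}^{T}S_{s}^{T}\lambda\in\mathbb{R}^{2n}.$$
By (\ref{rechang}) this map is given explicitly by $\mu_1=a\lambda_1$ and $\mu_k=sg(a)|a|^{\frac{1}{2n}}(s_{k-1}\lambda_1+\lambda_k)$ for $k=2,\dots,2n$, so it is a smooth bijection (for each fixed sign of $a$) onto $\{\mu\in\mathbb{R}^{2n}:\mu_1\neq 0\}$, with inverse $a=\mu_1/\lambda_1$ and $s_{k-1}=\bigl(\mu_k\,sg(a)|a|^{-\frac{1}{2n}}-\lambda_k\bigr)/\lambda_1$.

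Next I compute the Jacobian. Differentiating, the matrix $\partial\mu/\partial(a,s_1,\dots,s_{2n-1})$ is lower triangular: its first row is $(\lambda_1,0,\dots,0)$, and for $k\ge 2$ the $k$-th row has the only nonzero entry in column $k$ equal to $sg(a)|a|^{\frac{1}{2n}}\lambda_1$ (coming from $\partial\mu_k/\partial s_{k-1}$). Hence
$$|\det J| \;=\; |\lambda_1|\cdot\bigl(|a|^{\frac{1}{2n}}|\lambda_1|\bigr)^{2n-1}\;=\;|a|^{\frac{2n-1}{2n}}|\lambda_1|^{2n},$$
so $da\,ds=|a|^{-\frac{2n-1}{2n}}|\lambda_1|^{-2n}\,d\mu$.

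Substituting into the right-hand side of (\ref{cpsish}) gives
$$\int_{\mathbb{R}^{2n}}\frac{|\mathcal{F}_{Q}(\psi^{*})(\mu)|^{2}}{|a|^{\frac{4n^{2}-2n+1}{2n}+\frac{2n-1}{2n}}\,|\lambda_1|^{2n}}\,d\mu\;=\;\int_{\mathbb{R}^{2n}}\frac{|\mathcal{F}_{Q}(\psi^{*})(\mu)|^{2}}{|a|^{2n}|\lambda_1|^{2n}}\,d\mu,$$
since the exponents add to $(4n^{2}-2n+1+2n-1)/(2n)=2n$. Finally, using $|a|=|\mu_1|/|\lambda_1|$ yields $|a|^{2n}|\lambda_1|^{2n}=|\mu_1|^{2n}$, so the last integral equals $\int_{\mathbb{R}^{2n}}|\mathcal{F}_{Q}(\psi^{*})(\mu)|^{2}/|\mu_1|^{2n}\,d\mu=C_{\psi}$.

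There is no real obstacle in this argument; it is a bookkeeping exercise. The only delicate point to state cleanly is that the change of variables is valid for a.e.\ $\lambda$ (namely $\lambda_1\neq 0$) and that Tonelli's theorem applies since the integrand is non-negative, so the equality holds in $[0,+\infty]$ and in particular agrees with the finite value $C_{\psi}$ granted by admissibility.
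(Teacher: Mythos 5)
Your proof is correct and follows essentially the same route as the paper: both perform the change of variables $\mu=A_{a}^{T}S_{s}^{T}\lambda$ for fixed $\lambda$ with $\lambda_{1}\neq 0$ (the paper writes it as the sequential substitutions $w_{1}=a\lambda_{1}$, $w_{k}=sg(a)|a|^{\frac{1}{2n}}(s_{k-1}\lambda_{1}+\lambda_{k})$), and the exponent bookkeeping $\frac{4n^{2}-2n+1}{2n}+\frac{2n-1}{2n}=2n$ together with $|a|^{2n}|\lambda_{1}|^{2n}=|\mu_{1}|^{2n}$ is identical. One harmless slip: for $k\geq 2$ the $k$-th row of your Jacobian matrix also has a generally nonzero entry in the first column (namely $\partial\mu_{k}/\partial a$), so it is not true that the diagonal entry is the only nonzero one; but the matrix is still lower triangular, hence your determinant $|a|^{\frac{2n-1}{2n}}|\lambda_{1}|^{2n}$ and the rest of the computation stand.
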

\begin{proof} Let $\lambda\in \mathbb{R}^{2n}$. From the relation $\ref{rechang}$, we have \\

 $\displaystyle\int_{\mathbb{R}}\int_{\mathbb{R}^{2n-1}}\frac{\Big|\mathcal{F}_{Q}(\psi^{*})(A_{a}^{T}S_{s}^{T}\lambda)\Big|^2}
{|a|^{\frac{4n^2-2n+1}{2n}}}da \ ds$
\begin{eqnarray*}
&=&\int_{\mathbb{R}}\int_{\mathbb{R}^{2n-1}}\frac{|\mathcal{F}_{Q}(\psi^{*})\Big((a\lambda_1 ,\  sg(a)|a|^{\frac{1}{2n}}(s_{1}\lambda_{1}+\lambda_{2}), ...,\  sg(a)|a|^{\frac{1}{2n}}(s_{2n-1}\lambda_{1}+\lambda_{2n})\Big)|^2}
{|a|^{\frac{4n^2-2n+1}{2n}}}da\ ds.
\end{eqnarray*}
Substituting $w_1 =a\lambda_1$, i.e, $d w_1 =|\lambda_1|\ da$. Next, we put
$$\begin{array}{ccc}
     w_2=& sg(\frac{w_1}{\lambda_1})|\frac{w_1}{\lambda_1}|^{\frac{1}{2n}}(s_{1}\lambda_{1}+\lambda_{2})&\\
     .& &\\
     .& &\\
     .& &\\
    w_n=& sg(\frac{w_1}{\lambda_1})|\frac{w_1}{\lambda_1}|^{\frac{1}{2n}}(s_{2n-1}\lambda_{1}+\lambda_{2n}),&
   \end{array}$$
i.e, $dw_2= |\frac{w_1}{\lambda_1}|^{\frac{1}{2n}}|\lambda_{1}|\ ds_{1}$,\ ...\ , $dw_{2n}= |\frac{w_1}{\lambda_1}|^{\frac{1}{2n}}|\lambda_{1}|\ ds_{2n-1}$.\\
Then
\begin{eqnarray*}
\int_{\mathbb{R}}\int_{\mathbb{R}^{2n-1}}\frac{\Big|\mathcal{F}_{Q}(\psi^{*})(A_{a}^{T}S_{s}^{T}\lambda)\Big|^2}
{|a|^{\frac{4n^2-2n+1}{2n}}}da \ ds&=&\int_{\mathbb{R}^{2n}}\frac{|\mathcal{F}_{Q}(\psi^{*})\Big((w_1 ,\  w_2, ...,\  w_{2n})\Big)|^2}
{|\frac{w_1}{\lambda_1}|^{\frac{4n^2-2n+1}{2n}}}\frac{dw_{1}}{|\lambda_{1}|}\frac{dw_{2}...dw_{2n}}
{(|\frac{w_1}{\lambda_1}|^{\frac{1}{2n}}|\lambda_{1}|)^{2n-1}}\\
&=&\int_{\mathbb{R}^{2n}}
\frac{\Big|\mathcal{F}_{Q}(\psi^{*})(w_1 , \widetilde{w})\Big|^2}
{|w_1|^{2n}} dw_1 \ d\widetilde{w}\\
&=& C_{\psi},
\end{eqnarray*}
where $\widetilde{w}=(w_2 ,...,w_{2n})$.
\end{proof}
\begin{definition}\label{desha}
Let $\psi$ be an  admissible quaternion shearlet in $L^2(\mathbb{R}^{2n},\mathbb{H})$. The continuous
quaternion shearlet transform $\mathcal{SH}_{\psi}^{Q}$ is defined for every quaternion function $f$ in $L^2(\mathbb{R}^{2n},\mathbb{H})$, by
$$\mathcal{SH}_{\psi}^{Q}(f)(a,s,t) = \langle f, \psi_{a,s,t}  \rangle_{Q} = \displaystyle\int_{\mathbb{R}^{2n} } f(x)\overline{\psi_{a,s,t}(x)}\ dx. $$
\end{definition}
\begin{proposition} \label{123}
Let $\psi$ be an admissible quaternion  shearlet in $ L^2(\mathbb{R}^{2n},\mathbb{H})$. For every $f\in L^2(\mathbb{R}^{2n},\mathbb{H})$, the continuous quaternion shearlet transform can be represented as
\begin{equation}
\mathcal{SH}^{Q}_{\psi}(f)(a,s,t) =  \big(f*\psi_{a,s,0}^{*}\big)(t),
\end{equation}
where $\psi_{a,s,0}^{*}(x) = \overline{\psi_{a,s,0}(-x)}$.
\end{proposition}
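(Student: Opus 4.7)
The plan is to unpack both sides of the claimed identity directly from their definitions and match them term by term. There are no deep tools required here; this is a translation-to-convolution identity, so the only real content is to verify that the ``$t$'' appearing in the dilation operator $\pi_{a,s,t}$ enters as a pure translation once the matrix action is absorbed into the $t=0$ piece.

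First I would start from Definition~\ref{desha}, writing
$$\mathcal{SH}^{Q}_{\psi}(f)(a,s,t) = \int_{\mathbb{R}^{2n}} f(x)\,\overline{\psi_{a,s,t}(x)}\,dx,$$
and then expand $\psi_{a,s,t}(x) = |a|^{\frac{1}{4n}-1}\psi\bigl(A_{a}^{-1}S_{s}^{-1}(x-t)\bigr)$ from \eqref{psiast}. Comparing with the same expression at $t=0$, one immediately reads off the translation relation
$$\psi_{a,s,t}(x) = \psi_{a,s,0}(x-t).$$
This step is the crux of the calculation, though it is essentially just bookkeeping from the definition of $\pi_{a,s,t}$.

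Next I would take the quaternionic conjugate of both sides and compare with the definition of the involution $\psi_{a,s,0}^{*}(y) = \overline{\psi_{a,s,0}(-y)}$. Setting $y=t-x$ gives
$$\psi_{a,s,0}^{*}(t-x) = \overline{\psi_{a,s,0}(-(t-x))} = \overline{\psi_{a,s,0}(x-t)} = \overline{\psi_{a,s,t}(x)}.$$
Substituting this into the integral yields
$$\mathcal{SH}^{Q}_{\psi}(f)(a,s,t) = \int_{\mathbb{R}^{2n}} f(x)\,\psi_{a,s,0}^{*}(t-x)\,dx,$$
which is exactly $\bigl(f * \psi_{a,s,0}^{*}\bigr)(t)$ by the convolution definition in Section~2.

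The main obstacle, such as it is, lies in being careful with the order of the quaternion factors: since $\mathbb{H}$ is non-commutative one must keep $f(x)$ on the left and $\psi_{a,s,0}^{*}(t-x)$ on the right throughout, matching the convention used in the convolution definition. Once the order is respected, no further hypothesis (in particular, no use of \eqref{hyp}) is needed for this pointwise identity; the statement is purely algebraic and holds for every admissible $\psi$ and every $f\in L^{2}(\mathbb{R}^{2n},\mathbb{H})$ without any a priori integrability issue, since it is just a change of the dummy variable inside the defining integral.
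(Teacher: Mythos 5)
Your argument is correct and is essentially the same computation the paper makes: both proofs expand $\psi_{a,s,t}(x)=|a|^{\frac{1}{4n}-1}\psi\bigl(A_a^{-1}S_s^{-1}(x-t)\bigr)$, rewrite $x-t=-(t-x)$ to recognize $\overline{\psi_{a,s,t}(x)}=\psi_{a,s,0}^{*}(t-x)$, and read off the convolution. Your added remarks on quaternion factor order and on not needing the hypothesis \eqref{hyp} are accurate but do not change the route.
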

\begin{proof}
We observe that
\begin{eqnarray*}
\mathcal{SH}_{\psi}^{Q}(f)(a,s,t) & =& \displaystyle\int_{\mathbb{R}^{2n}} f(x) \overline{\psi_{a,s,t}(x)}\ dx \\
& = & |a|^{\frac{1}{4n}-1} \displaystyle\int_{\mathbb{R}^{2n}} f(x)\overline{ \psi(A_{a}^{-1}S_{s}^{-1}(x-t))}\  dx\\
& = & |a|^{\frac{1}{4n}-1} \displaystyle\int_{\mathbb{R}^{2n}} f(x)\overline{ \psi(A_{a}^{-1}S_{s}^{-1}(-(t-x))}\  dx\\
&  =  & \displaystyle\int_{\mathbb{R}^{2n}} f(x)\psi_{a,s,0}^{*}(t-x)\ dx\\
&  =   & \big(f*\psi_{a,s,0}^{*}\big)(t).
\end{eqnarray*}
\end{proof}
\begin{theorem}
Let $\psi$ be an admissible quaternion shearlet in $L^{2}(\mathbb{R}^{2n},\mathbb{H})$. For every function $f$ in $L^{2}(\mathbb{R}^{2n},\mathbb{H})$, the quaternion shearlet transform satisfies  the following properties
\begin{itemize}
\item[(i)] \textbf{Linearity:}
\begin{eqnarray*}
\mathcal{SH}_{\psi}^{Q}(\alpha f + \beta g)(a,s,t) = \alpha\ \mathcal{SH}_{\psi}^{Q}(f)(a,s,t) + \beta\ \mathcal{SH}_{\psi}^{Q}(g)(a,s,t),\ \ \ \alpha,\beta \in \mathbb{H}.
\end{eqnarray*}
\item[(ii)] \textbf{Anti-linearity:}
\begin{eqnarray*}
\mathcal{SH}_{\alpha\psi+\beta\phi}^{Q}(f)(a,s,t) = \mathcal{SH}_{\psi}^{Q}(f)(a,s,t) \overline{\alpha} + \mathcal{SH}_{\phi}^{Q}(f)(a,s,t) \overline{\beta},\ \ \ \alpha,\beta \in \mathbb{H}.
\end{eqnarray*}
\item[(iii)] \textbf{Translation:}
\begin{eqnarray*}
\mathcal{SH}_{\psi}^{Q}(T_{\alpha}f)(a,s,t) = \mathcal{SH}_{\psi}^{Q}(f)(a,s,t-\alpha),\ \ \alpha\in\mathbb{R}^{2n},
\end{eqnarray*}
where $T_{\alpha}f(x) = f(x-\alpha)$.

\end{itemize}
\end{theorem}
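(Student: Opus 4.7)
The plan is to verify each of the three properties directly from the defining integral formula
$$\mathcal{SH}_{\psi}^{Q}(f)(a,s,t)=\int_{\mathbb{R}^{2n}} f(x)\,\overline{\psi_{a,s,t}(x)}\,dx$$
given in Definition \ref{desha}, using only the linearity of the integral, the conjugation rule $\overline{pq}=\overline{q}\,\overline{p}$, and the translation invariance of Lebesgue measure. No machinery beyond these elementary facts is needed.

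For (i), I would observe that since the constants $\alpha,\beta\in\mathbb{H}$ sit on the left of $f$ and $g$ and the integral of a quaternion-valued function is taken componentwise, constant left-multipliers can be pulled outside on the left. Splitting $\int(\alpha f+\beta g)(x)\overline{\psi_{a,s,t}(x)}\,dx$ into two integrals gives the claim at once. For (ii), the key observation is that the dilation/shear/translation operator $\pi_{a,s,t}$ defined in (\ref{psiast}) multiplies $\psi$ only by the real scalar $|a|^{\frac{1}{4n}-1}$ and reparameterizes the argument; consequently it is $\mathbb{H}$-linear from the left, so $(\alpha\psi+\beta\phi)_{a,s,t}=\alpha\,\psi_{a,s,t}+\beta\,\phi_{a,s,t}$. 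Applying the anti-involution property $\overline{\alpha u+\beta v}=\overline{u}\,\overline{\alpha}+\overline{v}\,\overline{\beta}$ places $\overline{\alpha}$ and $\overline{\beta}$ on the right of the integrands, and these constant right-multipliers may then be pulled out on the right, producing precisely the stated identity. This is exactly why the property is anti-linearity rather than linearity.

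For (iii), I would perform the change of variable $y=x-\alpha$ in the defining integral. Since $\pi_{a,s,t}(\psi)$ depends on $x$ only through the combination $x-t$, substitution turns $\overline{\psi_{a,s,t}(x)}=\overline{\psi_{a,s,t}(y+\alpha)}$ into $\overline{\psi_{a,s,t-\alpha}(y)}$, and the translation invariance of $dx$ gives $\mathcal{SH}_{\psi}^{Q}(T_{\alpha}f)(a,s,t)=\mathcal{SH}_{\psi}^{Q}(f)(a,s,t-\alpha)$.

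The only subtlety, and what I would regard as the main obstacle, is bookkeeping with left versus right multiplication in the noncommutative algebra $\mathbb{H}$: scalars on the left of $f$ survive on the left (giving linearity in (i)), while scalars originally on the left of $\psi$ emerge on the right after conjugation (giving anti-linearity in (ii)). Once this is tracked carefully, the computations are routine.
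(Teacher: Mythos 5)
Your proposal is correct and follows essentially the same route as the paper: direct computation from the defining integral, pulling left quaternion constants out on the left for (i), using the anti-involution $\overline{\alpha u+\beta v}=\overline{u}\,\overline{\alpha}+\overline{v}\,\overline{\beta}$ (together with the left-$\mathbb{H}$-linearity of $\pi_{a,s,t}$, which the paper uses implicitly and you make explicit) for (ii), and the change of variable $z=x-\alpha$ for (iii). Your remark on tracking left versus right multiplication in $\mathbb{H}$ matches exactly the bookkeeping the paper performs.
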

\begin{proof}
\begin{itemize}
\item[(i)] Let $\alpha$, $\beta$ in $\mathbb{H}$. Then
$$\begin{tabular}{lll}
$\mathcal{SH}_{\psi}^{Q}(\alpha f + \beta g)(a,s,t)$  & $=$ & $\displaystyle\int_{\mathbb{R}^{2n}} (\alpha f(x) + \beta g(x))\overline{\psi_{a,s,t}(x)} dx$\\
& $=$ & $\alpha \displaystyle\int_{\mathbb{R}^{2n}}  f(x)\overline{\psi_{a,s,t}(x)} dx + \beta \displaystyle\int_{\mathbb{R}^{2n}}  g(x)\overline{\psi_{a,s,t}(x)} dx$\\
& $=$ & $ \alpha\ \mathcal{SH}_{\psi}^{Q}(a,s,t) + \beta\ \mathcal{SH}_{\psi}^{Q}g(a,s,t).$
\end{tabular}$$
\item[(ii)] Let $\alpha$, $\beta$ in $\mathbb{H}$. Then
$$ \begin{tabular}{lll}
$\mathcal{SH}_{\alpha\psi+\beta\phi}^{Q}(f)(a,s,t)$ & $=$ & $\displaystyle\int_{\mathbb{R}^{2n}} f(x) \overline{(\alpha\psi_{a,s,t}(x) + \beta\phi_{a,s,t}(x)) } dx$\\
& $=$ & $\displaystyle\int_{\mathbb{R}^{2n}} f(x) \big( \overline{\psi_{a,s,t}(x)}\ \overline{\alpha} + \overline{\phi_{a,s,t}(x)}\ \overline{\beta}\big)  dx$\\
& $=$ & $\mathcal{SH}_{\psi}^{Q}(f)(a,s,t)\ \overline{\alpha} + \mathcal{SH}_{\phi}^{Q}(f)(a,s,t)\ \overline{\beta}$.
\end{tabular}$$
\item[(iii)] Let $\alpha$  in $\mathbb{H}$. Then
$$\begin{tabular}{lllll}
$\mathcal{SH}_{\psi}^{Q}(T_{\alpha}f)(a,s,t)$
 & $=$ & $\displaystyle\int_{\mathbb{R}^{2n}} T_{\alpha}f(x)\overline{ \psi_{a,s,t}(x)}\ dx$\\
 & $=$ & $\displaystyle\int_{\mathbb{R}^{2n}} f(x-\alpha)\overline{ \psi_{a,s,t}(x)}\ dx$\\
 & $=$ & $|a|^{\frac{1}{4n}-1} \displaystyle\int_{\mathbb{R}^{2n}}  f(x-\alpha)  \overline{\psi(A_{a}^{-1}S_{s}^{-1}(x-t))}\ dx$\\
 & $=$ & $|a|^{\frac{1}{4n}-1} \displaystyle\int_{\mathbb{R}^{2n}}  f(z)  \overline{\psi(A_{a}^{-1}S_{s}^{-1}(z-(t-\alpha))}\ dz$\\
 & $=$ & $\displaystyle\int_{\mathbb{R}^{2n}}  f(z) \overline{\psi_{a,s,t-\alpha}(z)}\ dz$\\
 & $=$ & $\mathcal{SH}_{\psi}^{Q}(f)(a,s,t-\alpha) .$
\end{tabular}$$
 \end{itemize}
\end{proof}
\begin{lemma}\label{l&}
Assume that  $\psi$ satisfies the assumption of theorem \ref{conditions} then for every $f\in L^{2}(\mathbb{R}^{2n},\mathbb{H})$, we have
\begin{equation}\label{fousht}
\begin{tabular}{lll}
$\mathcal{F}_{Q}(\mathcal{SH}_{\psi}^{Q}(f)(a,s,.))(w)$
& $=$ & $|a|^{1-\frac{1}{4n}} \mathcal{F}_{Q}(f)(w) \mathcal{F}_{Q}(\psi^{*})(A_{a}^{T}S_{s}^{T}w)$ ,\ \ $w \in \mathbb{R}^{2n}$ \\
& $=$ & $ |a|^{1-\frac{1}{4n}} \mathcal{F}_{Q}(f)(w) \mathcal{F}_{Q}(\psi^{*})(aw_{1},sg(a)|a|^{\frac{1}{2n}} (sw_{1}+ \tilde{w})),$
\end{tabular}
\end{equation}
where $\tilde{w} = (w_{2},w_{3},...,w_{2n}).$
\end{lemma}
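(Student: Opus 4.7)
The plan is to reduce the computation to the convolution identity from Theorem \ref{conditions} and then apply the dilation formula for the QFT. First I would use Proposition \ref{123} to write
$$\mathcal{SH}_{\psi}^{Q}(f)(a,s,t) = (f * \psi_{a,s,0}^{*})(t),$$
so that the QFT on the left-hand side of \eqref{fousht} becomes the QFT of a convolution evaluated in the translation variable $t$.

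Next I would observe that $\psi_{a,s,0}^{*}$ is itself a dilated version of $\psi^{*}$. Indeed, a direct change of variable in the definition $\psi_{a,s,0}^{*}(x) = \overline{\psi_{a,s,0}(-x)}$ gives
$$\psi_{a,s,0}^{*}(x) = |a|^{\frac{1}{4n}-1}\,\overline{\psi(-A_{a}^{-1}S_{s}^{-1}x)} = |a|^{\frac{1}{4n}-1}\,\psi^{*}(A_{a}^{-1}S_{s}^{-1}x) = (\psi^{*})_{a,s,0}(x).$$
Assuming $\psi$ fulfils \eqref{hyp}, the function $\psi_{a,s,0}^{*}$ inherits the hypothesis required by Theorem \ref{conditions}, so that the convolution identity \eqref{conv} applies and yields
$$\mathcal{F}_{Q}\bigl(\mathcal{SH}_{\psi}^{Q}(f)(a,s,\cdot)\bigr)(w) = \mathcal{F}_{Q}(f)(w)\,\mathcal{F}_{Q}\bigl((\psi^{*})_{a,s,0}\bigr)(w).$$

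Then I would invoke item (ii) of the proposition giving the QFT of a dilation, specialised at $t = 0_{2n}$ so that the exponential factors disappear:
$$\mathcal{F}_{Q}\bigl((\psi^{*})_{a,s,0}\bigr)(w) = |a|^{1-\frac{1}{4n}}\,\mathcal{F}_{Q}(\psi^{*})(A_{a}^{T}S_{s}^{T}w).$$
Plugging this into the previous display gives the first form of \eqref{fousht}, and the second form follows by substituting the explicit expression of $A_{a}^{T}S_{s}^{T}w$ recorded in \eqref{rechang}.

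The only non-routine point is the verification that $\psi_{a,s,0}^{*}$ still satisfies condition \eqref{hyp} so that the convolution theorem applies; since the condition \eqref{hyp} bears only on how $\mathcal{F}_{Q}(\psi^{*})$ commutes with the right exponential and the dilation operator $\pi_{a,s,0}$ acts only through a left composition with $A_{a}^{-1}S_{s}^{-1}$ in the argument, this commutation is preserved, so the hypothesis transfers. Everything else is a direct substitution.
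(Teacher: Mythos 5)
Your proposal follows essentially the same route as the paper's proof: write $\mathcal{SH}_{\psi}^{Q}(f)(a,s,\cdot)=f*\psi_{a,s,0}^{*}$ via Proposition \ref{123}, apply the convolution theorem \eqref{conv}, and then use the dilation formula (item (ii), specialised at $t=0_{2n}$) together with \eqref{rechang}. It is correct; in fact you are slightly more careful than the paper, which invokes the convolution theorem without explicitly checking that $\psi_{a,s,0}^{*}=(\psi^{*})_{a,s,0}$ still satisfies condition \eqref{hyp}.
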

\begin{proof}By invoking the convolution Theorem for QFT we have
\begin{eqnarray*}
\mathcal{F}_{Q}(\mathcal{SH}_{\psi}^{Q}(f)(a,s,.))(w)
& = & \mathcal{F}_{Q}(f*\psi_{a,s,0}^{*}(.))(w)\\
& = & \mathcal{F}_{Q}(f)(w) \mathcal{F}_{Q}(\psi_{a,s,0}^{*})(w)\\
& = & |a|^{1-\frac{1}{4n}} \mathcal{F}_{Q}(f)(w) \mathcal{F}_{Q}(\psi^{*})(A_{a}^{T}S_{s}^{T}w).
\end{eqnarray*}
This achieves the proof.
\end{proof}
\begin{example}
We choose
$$\psi(x) =\psi(x_1,...,x_{2n})=
\left\{
\begin{array}{ccc}
\displaystyle\prod_{p=1}^{2n}e^{-x_{p}}, &  \ \ \ & x_{p} \in [0,+\infty[\\
0,  &   \ \ \  & \mbox{elsewhere}
\end{array}
\right.$$
and we consider a quaternion valued function $f$ defined by
\begin{eqnarray*}
f(x) = e^{-\frac{|x|^{2}}{\gamma^{2}}} + j e^{-|x|^{2}} , \ \ \ \gamma \in \mathbb{R}.
\end{eqnarray*}
The Quaternion Fourier transform with respect to the function $f$ is obtained as follows
\begin{eqnarray*}
\mathcal{F}_{Q}(f)(w) & = &
 (\pi^{n}\gamma^{2n})\  e^{-\pi^{2}\gamma^{2}|w|^{2}} + j\pi^{n} e^{-\pi^{2}|w|^{2}}
\end{eqnarray*}
similary $\mathcal{F}_{Q}(\psi^{*})(aw_{1},sg(a)|a|^{\frac{1}{2}} (sw_{1}+w_{2}))$ can be obtained as \ \\ \ \\
$\mathcal{F}_{Q}(\psi^{*})(aw_{1},sg(a)|a|^{\frac{1}{2n}} (sw_{1}+\tilde{w})) $
\begin{eqnarray*}
& = & \bigg(\dfrac{2i\pi w_{1}+1}{4\pi^{2}w_{1}^{2}+1}\bigg)
\displaystyle\prod_{p=2}^{n}\bigg(\dfrac{1 + 2i\pi sg(a)|a|^{\frac{1}{2n}}(sw_{1}+w_{p})}{1 + 4\pi^{2}|a|^{\frac{1}{n}}(sw_{1}+w_{p})^{2}}\bigg)
\displaystyle\prod_{q=n+1}^{2n}\bigg(\dfrac{1 + 2j\pi sg(a)|a|^{\frac{1}{2n}}(sw_{1}+w_{q})}{1 + 4\pi^{2}|a|^{\frac{1}{n}}(sw_{1}+w_{q})^{2}}\bigg)
\end{eqnarray*}
By virtue to lemma \ref{l&} , we have\ \\ \ \\
$\mathcal{F}_{Q}(\mathcal{SH}^{Q}_{\psi}(f))(a,s,.))(w) = |a|^{1 - \frac{1}{4n}}\ (\pi^{n}\gamma^{2n})\ [ e^{-\pi^{2}\gamma^{2}|w|^{2}} + j\pi^{n} e^{-\pi^{2}|w|^{2}}] \bigg(\dfrac{2i\pi w_{1}+1}{4\pi^{2}w_{1}^{2}+1}\bigg)$
\begin{eqnarray*}
\times\displaystyle\prod_{p=2}^{n}\bigg(\dfrac{1 + 2i\pi sg(a)|a|^{\frac{1}{2n}}(sw_{1}+w_{p})}{1 + 4\pi^{2}|a|^{\frac{1}{n}}(sw_{1}+w_{p})^{2}}\bigg)
\displaystyle\prod_{q=n+1}^{2n}\bigg(\dfrac{1 + 2j\pi sg(a)|a|^{\frac{1}{2n}}(sw_{1}+w_{q})}{1 + 4\pi^{2}|a|^{\frac{1}{n}}(sw_{1}+w_{q})^{2}}\bigg).
\end{eqnarray*}
\end{example}

\begin{lemma}\label{1&&}
Let $\psi$ be an admissible quaternion shearlet  satisfying the assumption of Theorem \ref{conditions} . Then for every function $f$ in $L^2(\mathbb{R}^{2n},\mathbb{H})$, we have
\begin{eqnarray}\int_{\mathbb{R}^{2n}}|\mathcal{F}_{Q}(\mathcal{SH}_{\psi}^{Q}(f)(a,s,.))(\lambda)|^{2} d\lambda =
\int_{\mathbb{R}^{2n}}|\mathcal{SH}_{\psi}^{Q}(f)(a,s,t)|^{2} d t.
\end{eqnarray}
\end{lemma}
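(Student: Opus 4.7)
The plan is to view the desired identity as Plancherel's formula for the quaternion Fourier transform applied in the variable $t$, and to reach it through the convolution structure already established for the shearlet transform.

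First, I would invoke Proposition \ref{123} to rewrite
$$\mathcal{SH}_{\psi}^{Q}(f)(a,s,\cdot)=f\ast\psi_{a,s,0}^{\ast}.$$
Next, using Lemma \ref{l&} (that is, the QFT-convolution formula provided by Theorem \ref{conditions}), I have the factorization
$$\mathcal{F}_{Q}(\mathcal{SH}_{\psi}^{Q}(f)(a,s,\cdot))(\lambda)=\mathcal{F}_{Q}(f)(\lambda)\,\mathcal{F}_{Q}(\psi_{a,s,0}^{\ast})(\lambda).$$
Applying the multiplicativity of the quaternion modulus $|pq|=|p||q|$, squaring and integrating in $\lambda\in\mathbb{R}^{2n}$ yields
$$\int_{\mathbb{R}^{2n}}|\mathcal{F}_{Q}(\mathcal{SH}_{\psi}^{Q}(f)(a,s,\cdot))(\lambda)|^{2}d\lambda=\int_{\mathbb{R}^{2n}}|\mathcal{F}_{Q}(f)(\lambda)|^{2}|\mathcal{F}_{Q}(\psi_{a,s,0}^{\ast})(\lambda)|^{2}d\lambda.$$

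The final step is to identify the right-hand side with $\int|\mathcal{SH}_{\psi}^{Q}(f)(a,s,t)|^{2}dt$ via identity (\ref{theorem}) applied with $g=\psi_{a,s,0}^{\ast}$. This substitution gives
$$\int_{\mathbb{R}^{2n}}|\mathcal{F}_{Q}(f)(\lambda)|^{2}|\mathcal{F}_{Q}(\psi_{a,s,0}^{\ast})(\lambda)|^{2}d\lambda=\int_{\mathbb{R}^{2n}}|(f\ast\psi_{a,s,0}^{\ast})(t)|^{2}dt=\int_{\mathbb{R}^{2n}}|\mathcal{SH}_{\psi}^{Q}(f)(a,s,t)|^{2}dt,$$
which is the claimed equality.

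The only real bookkeeping point, which I anticipate as the main (minor) obstacle, is verifying that the dilated reflected-conjugate $\psi_{a,s,0}^{\ast}$ still satisfies the hypothesis (\ref{hyp}) of Theorem \ref{conditions}, so that the convolution Plancherel identity (\ref{theorem}) is indeed available. This amounts to a direct check using Definition \ref{defsh} together with the explicit transformation formula for $\mathcal{F}_{Q}(\psi_{a,s,0})$ recorded in part (ii) of the proposition preceding Definition \ref{desha} (the phase factor in $t$ is absent because $t=0$, so the hypothesis is inherited from $\psi$). Once this is in place, nothing else is needed: both sides of the identity are simultaneously finite or infinite by (\ref{theorem}), and the argument is complete.
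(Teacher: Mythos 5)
Your argument is correct and is essentially the paper's own proof: both rewrite $\mathcal{SH}_{\psi}^{Q}(f)(a,s,\cdot)=f*\psi_{a,s,0}^{*}$ via Proposition \ref{123}, apply the convolution--Plancherel identity (\ref{theorem}) to convert the $t$-integral into $\int_{\mathbb{R}^{2n}}|\mathcal{F}_{Q}(f)(\lambda)|^{2}|\mathcal{F}_{Q}(\psi_{a,s,0}^{*})(\lambda)|^{2}\,d\lambda$, and then identify this with $\int_{\mathbb{R}^{2n}}|\mathcal{F}_{Q}(\mathcal{SH}_{\psi}^{Q}(f)(a,s,\cdot))(\lambda)|^{2}\,d\lambda$ through Lemma \ref{l&} and the multiplicativity of the quaternion modulus. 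The only difference is that you explicitly flag (and settle) the check that $\psi_{a,s,0}^{*}$ inherits hypothesis (\ref{hyp}), a point the paper leaves tacit.
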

\begin{proof} Using Lemma $\ref{l&}$ and equation (\ref{theorem}) , we get
\begin{eqnarray*}
\int_{\mathbb{R}^{2n}}|\mathcal{SH}_{\psi}^{Q}(f)(a,s,t)|^{2} d t & = & \int_{\mathbb{R}^{2n}}|f * \psi_{a,s,0}^{\ast}(t)|^{2} d t\\
&=&\int_{\mathbb{R}^{2n}}|\mathcal{F}_{Q}(f)(\lambda)|^2|\mathcal{F}_{Q}(\psi^{\ast}_{a,s,0})(\lambda)|^2 d\lambda\\
& = & \int_{\mathbb{R}^{2n}}|\mathcal{F}_Q (\mathcal{SH}_{\psi}^{Q}(f)(a,s,.))(\lambda)|^{2} d \lambda .
\end{eqnarray*}
\end{proof}
\begin{theorem}\label{thplan} Let $\psi$ be an admissible quaternion shearlet in $L^{2}(\mathbb{R}^{2n},\mathbb{H})$ satisfying the assumption of theorem \ref{conditions}. Then, we have
\begin{enumerate}

\item(Parseval's theorem for $\mathcal{SH}_{\psi}^{Q}$) For all functions $f$ and $g$ in $L^2(\mathbb{R}^{2n},\mathbb{H})$,
\begin{eqnarray}\label{pstsh}
\langle\mathcal{SH}_{\psi}^{Q}(f), \mathcal{SH}_{\psi}^{Q}(g)\rangle_{\mu}=C_{\psi}( f,g )_{Q},
\end{eqnarray}
where $\langle . \rangle_{\mu}$ is the inner product  on $L^2(d\mu)$.
\item (Plancherel's theorem for $\mathcal{SH}_{\psi}^{Q}$) For every function $f$ in $L^2(\mathbb{R}^{2n},\mathbb{H})$,
\begin{eqnarray}\label{Ptsh}
\int_{\mathbb{R}}\int_{\mathbb{R}^{2n-1}} \int_{\mathbb{R}^{2n}}|\mathcal{SH}_{\psi}^{Q}(f)(a,s,t)|^{2}\ d\mu(a,s,t) = C_{\psi}\|f\|_{2}^{2}.
\end{eqnarray}
\end{enumerate}
\end{theorem}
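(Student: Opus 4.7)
The strategy is to reduce both assertions to the Plancherel/Parseval formula for the two-sided quaternion Fourier transform via the convolution representation of $\mathcal{SH}_\psi^Q$ (Proposition \ref{123}), and then use Lemma \ref{cpsish} to recover the admissibility constant $C_\psi$. Since part $(2)$ is the specialization of $(1)$ to $f=g$, the plan is to prove Parseval's identity $(1)$ in full and derive Plancherel as an immediate corollary.

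First I fix $(a,s)\in \mathbb{R}^*\times\mathbb{R}^{2n-1}$ and compute the partial scalar inner product
$$I(a,s) := \int_{\mathbb{R}^{2n}} Sc\bigl(\mathcal{SH}_\psi^Q(f)(a,s,t)\,\overline{\mathcal{SH}_\psi^Q(g)(a,s,t)}\bigr)\,dt.$$
By Proposition \ref{123} this equals $(f*\psi_{a,s,0}^{\ast},\,g*\psi_{a,s,0}^{\ast})_Q$. Since the QFT is unitary for $(\cdot,\cdot)_Q$ (Parseval for the QFT) and since $\psi$ satisfies the hypotheses of Theorem \ref{conditions}, I may apply the convolution identity (\ref{conv}) to both factors, obtaining
$$I(a,s)=\bigl(\mathcal{F}_Q(f)\,\mathcal{F}_Q(\psi_{a,s,0}^{\ast}),\ \mathcal{F}_Q(g)\,\mathcal{F}_Q(\psi_{a,s,0}^{\ast})\bigr)_Q.$$

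Next comes the step that genuinely uses the quaternionic structure: expanding the inner product using $\overline{pq}=\bar q\,\bar p$ and the fact that $|\mathcal{F}_Q(\psi_{a,s,0}^{\ast})(\lambda)|^2$ is a \emph{real} scalar (hence central in $\mathbb{H}$), the integrand collapses to
$$|\mathcal{F}_Q(\psi_{a,s,0}^{\ast})(\lambda)|^2\;Sc\bigl(\mathcal{F}_Q(f)(\lambda)\,\overline{\mathcal{F}_Q(g)(\lambda)}\bigr).$$
I then apply Fubini and integrate in $(a,s)$ first. By Lemma \ref{l&} one has $|\mathcal{F}_Q(\psi_{a,s,0}^{\ast})(\lambda)|^2 = |a|^{2-\frac{1}{2n}}\,|\mathcal{F}_Q(\psi^{\ast})(A_a^{T}S_s^{T}\lambda)|^2$, and combining with the Haar density $|a|^{-(2n+1)}$ produces exactly the weight $|a|^{-\frac{4n^2-2n+1}{2n}}$ featured in Lemma \ref{cpsish}. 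That lemma identifies the $(a,s)$-integral with $C_\psi$, independent of $\lambda$.

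The last step is immediate: pulling out $C_\psi$ leaves $C_\psi\,(\mathcal{F}_Q(f),\mathcal{F}_Q(g))_Q$, which by Parseval for the QFT equals $C_\psi\,(f,g)_Q$; this proves (\ref{pstsh}), and setting $f=g$ yields (\ref{Ptsh}). The main technical obstacle is the noncommutativity of $\mathbb{H}$: both the convolution theorem (applied to each side) and the bookkeeping inside the scalar-part inner product have to be handled with care, and the decoupling via Fubini works precisely because $|\mathcal{F}_Q(\psi_{a,s,0}^{\ast})|^2$ is real and therefore commutes past every quaternionic factor.
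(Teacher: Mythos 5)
Your proposal is correct and follows essentially the same route as the paper: pass to the Fourier side in the $t$-variable, use the identity $\mathcal{F}_{Q}(\mathcal{SH}_{\psi}^{Q}(f)(a,s,\cdot))(w)=|a|^{1-\frac{1}{4n}}\mathcal{F}_{Q}(f)(w)\mathcal{F}_{Q}(\psi^{*})(A_{a}^{T}S_{s}^{T}w)$, apply Fubini so that the $(a,s)$-integral reduces to the admissibility identity (\ref{cpsish}) giving $C_{\psi}$, and conclude by QFT Parseval, with Plancherel obtained by setting $f=g$. The only difference is cosmetic: you unfold Lemma \ref{l&} back into Proposition \ref{123} plus the convolution theorem and make explicit the centrality of the real factor $|\mathcal{F}_{Q}(\psi^{*}_{a,s,0})|^{2}$, a step the paper performs implicitly.
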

\begin{proof}
\begin{enumerate}
\item
Using Fubini's theorem , lemma \ref{l&} and applying Parseval's formula for the two sided QFT to the $t$-integral into the left side of the Eq. (\ref{pstsh}) yields\\\\
$Sc\bigg(\displaystyle\int_{\mathbb{R}}\int_{\mathbb{R}^{2n-1}}\int_{\mathbb{R}^{2n}} \mathcal{S}_{\psi}^{Q}(f)(a,s,t)\overline{\mathcal{S}_{\psi}^{Q}(g)(a,s,t)} d\mu(a,s,t)\bigg)$\\
\begin{eqnarray*}
& = & \displaystyle\int_{\mathbb{R}}\int_{\mathbb{R}^{2n-1}}Sc\bigg(\int_{\mathbb{R}^{2n}} \mathcal{S}_{\psi}^{Q}(f)(a,s,t)\overline{\mathcal{S}_{\psi}^{Q}(g)(a,s,t)}\  dt \bigg) \dfrac{da}{|a|^{2n+1}}\ ds\\
& = & \displaystyle\int_{\mathbb{R}}\int_{\mathbb{R}^{2n-1}} Sc\bigg(\int_{\mathbb{R}^{2n}} \mathcal{F}_{Q}( \mathcal{S}_{\psi}^{Q}(f)(a,s,.))(w)\overline{\mathcal{F}_{Q}(\mathcal{S}_{\psi}^{Q}(g)(a,s,.))(w)} \  dw \bigg) \dfrac{da}{|a|^{2n+1}}\ ds\\
& = & \displaystyle\int_{\mathbb{R}}\int_{\mathbb{R}^{2n-1}} Sc\bigg(\int_{\mathbb{R}^{2n}} \mathcal{F}_{Q}(f)(w)\big|\mathcal{F}_{Q}(\psi^{*})(A_{a}^{T}S_{s}^{T}w)\big|^{2} \overline{\mathcal{F}_{Q}(g)(w)}\ dw \bigg) \dfrac{da}{|a|^{\frac{4n^{2}-2n+1}{2n}}}\ ds\\
& = & Sc\bigg(\int_{\mathbb{R}^{2}} \mathcal{F}_{Q}(f)(w) \bigg(\displaystyle\int_{\mathbb{R}}\int_{\mathbb{R}} \big|\mathcal{F}_{Q}(\psi^{*})(A_{a}^{T}S_{s}^{T}w)\big|^{2}\dfrac{da}{|a|^{\frac{4n^{2}-2n+1}{2n}}}\ ds \bigg) \overline{\mathcal{F}_{Q}(g)(w)}\ dw \bigg) \\
& = & C_{\psi} ( \mathcal{F}_{Q}(f),\mathcal{F}_{Q}(g)) _{Q}\\
& = & C_{\psi} ( f,g )_{Q}.
\end{eqnarray*}
\item
The result follows by taking $f = g$ in Eq.\ (\ref{pstsh}).
\end{enumerate}
\end{proof}
\begin{theorem} (Reconstruction formula for $\mathcal{SH}_{\psi}^{Q}$)\\
Let $\psi$ be an admissible quaternion shearlet in $L^{2}(\mathbb{R}^{2n},\mathbb{H})$ satisfying the assumption of theorem \ref{conditions}, such that $|\psi|$ is an admissible quaternion shearlet (in particular for $\psi$ non-negative) in $L^2(\mathbb{R}^{2n},\mathbb{H})$, for every function $f$ in $L^2(\mathbb{R}^{2n},\mathbb{H})$, we have
\begin{eqnarray}
f(x)=\frac{1}{C_{\psi}}\int_{\mathbb{R}}\int_{\mathbb{R}^{2n-1}} \int_{\mathbb{R}^{2n}}\mathcal{SH}_{\psi}(f)(a,s,t) \psi_{a,s,t}(x)\ d\mu(a,s,t)
\end{eqnarray}
weakly in $L^2(\mathbb{R}^{2n},\mathbb{H})$.
\end{theorem}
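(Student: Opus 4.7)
My plan is to establish the formula in its weak form: for every test function $g\in L^2(\mathbb{R}^{2n},\mathbb{H})$, show that the real scalar inner product $(f,g)_Q$ coincides with $\bigl(\tfrac{1}{C_\psi}\int \mathcal{SH}_\psi^Q(f)(a,s,t)\psi_{a,s,t}\,d\mu,\ g\bigr)_Q$, and then deduce the pointwise (weak) identification from the nondegeneracy of $(\cdot,\cdot)_Q$.

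First, I would invoke Parseval's identity for the shearlet transform, equation (\ref{pstsh}) in Theorem \ref{thplan}, to write
\begin{equation*}
C_\psi\,(f,g)_Q \;=\; \langle \mathcal{SH}_\psi^Q(f),\mathcal{SH}_\psi^Q(g)\rangle_\mu
\;=\; \int_{G}\! Sc\!\left(\mathcal{SH}_\psi^Q(f)(a,s,t)\,\overline{\mathcal{SH}_\psi^Q(g)(a,s,t)}\right)d\mu(a,s,t).
\end{equation*}
Next, using the anti-involutivity of quaternion conjugation, expand
$\overline{\mathcal{SH}_\psi^Q(g)(a,s,t)}=\overline{\int g(x)\overline{\psi_{a,s,t}(x)}\,dx}=\int \psi_{a,s,t}(x)\overline{g(x)}\,dx$, insert this into the integral, and move the $x$-integration and the scalar part $Sc$ inside.

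The second main step is to apply Fubini's theorem to swap the $x$-integration with the $(a,s,t)$-integration. Doing so produces
\begin{equation*}
C_\psi\,(f,g)_Q \;=\; \int_{\mathbb{R}^{2n}}\! Sc\!\left(\Bigl[\int_{G}\mathcal{SH}_\psi^Q(f)(a,s,t)\,\psi_{a,s,t}(x)\,d\mu(a,s,t)\Bigr]\overline{g(x)}\right)dx,
\end{equation*}
which, upon dividing by $C_\psi$, is exactly the statement that $h(x):=\tfrac{1}{C_\psi}\int_{G}\mathcal{SH}_\psi^Q(f)(a,s,t)\,\psi_{a,s,t}(x)\,d\mu(a,s,t)$ satisfies $(f,g)_Q=(h,g)_Q$ for every $g\in L^2(\mathbb{R}^{2n},\mathbb{H})$. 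Choosing $g=f-h$ yields $\|f-h\|_2^2=(f-h,f-h)_Q=0$, so $f=h$ in $L^2(\mathbb{R}^{2n},\mathbb{H})$; this is the weak reconstruction formula.

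The main obstacle is the Fubini step, and this is precisely where the hypothesis that $|\psi|$ is also an admissible quaternion shearlet enters. To justify absolute integrability, I would observe that the triangle inequality applied to Proposition \ref{123} gives the pointwise domination
\begin{equation*}
|\mathcal{SH}_\psi^Q(f)(a,s,t)| \;\le\; (|f|\ast |\psi|^{*}_{a,s,0})(t) \;=\; \mathcal{SH}_{|\psi|}^Q(|f|)(a,s,t),
\end{equation*}
and similarly $\int |\psi_{a,s,t}(x)||g(x)|\,dx=\mathcal{SH}_{|\psi|}^Q(|g|)(a,s,t)$ since $|\psi_{a,s,t}|=|\psi|_{a,s,t}$. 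Therefore
\begin{equation*}
\int_{G}\!\int_{\mathbb{R}^{2n}}\!|\mathcal{SH}_\psi^Q(f)(a,s,t)||\psi_{a,s,t}(x)||g(x)|\,dx\,d\mu
\;\le\; \int_{G}\mathcal{SH}_{|\psi|}^Q(|f|)\,\mathcal{SH}_{|\psi|}^Q(|g|)\,d\mu,
\end{equation*}
and by the Cauchy--Schwarz inequality on $L^2(d\mu)$ followed by Plancherel's formula (\ref{Ptsh}) applied to the admissible shearlet $|\psi|$, the right-hand side is bounded by $C_{|\psi|}\|f\|_2\|g\|_2<\infty$. This supplies the integrability needed to swap integrals and completes the argument.
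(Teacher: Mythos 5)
Your proposal is correct and follows essentially the same route as the paper: Parseval's identity (\ref{pstsh}), expansion of $\overline{\mathcal{SH}_{\psi}^{Q}(g)}$, and a Fubini swap justified through the admissibility of $|\psi|$ via Cauchy--Schwarz and Plancherel. The only differences are cosmetic: you dominate $|\mathcal{SH}_{\psi}^{Q}(f)|$ by $\mathcal{SH}_{|\psi|}^{Q}(|f|)$ (getting the constant $C_{|\psi|}$ where the paper keeps $\sqrt{C_{\psi}C_{|\psi|}}$), and you leave implicit the duality argument showing $h\in L^2(\mathbb{R}^{2n},\mathbb{H})$ (needed before testing against $g=f-h$), which your uniform bound in $\|g\|_2$ supplies exactly as in the paper.
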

\begin{proof} Let $f,g\in L^2(\mathbb{R}^{2n},\mathbb{H})$. From the relations $(\ref{Ptsh})$ and $(\ref{pstsh})$, we have

\begin{eqnarray*}
C_{\psi}( f,g )_{Q} &=& \int_{\mathbb{R}}\int_{\mathbb{R}^{2n-1}} \int_{\mathbb{R}^{2n}} Sc\big(\mathcal{SH}_{\psi}^{Q}(f)(a,s,t) \overline{\mathcal{SH}_{\psi}^{Q}(g)(a,s,t)}\big)\ d\mu(a,s,t)\\
&=&\int_{\mathbb{R}}\int_{\mathbb{R}^{2n-1}}\int_{\mathbb{R}^{2n}}Sc\bigg(\mathcal{SH}_{\psi}^{Q}(f)(a,s,t)\overline{\bigg(\int_{\mathbb{R}^{2n}}g(x)\overline{ \psi_{a,s,t}(x)}\ dx\bigg)}\bigg) d\mu(a,s,t)
\\
&=&\int_{\mathbb{R}}\int_{\mathbb{R}^{2n-1}}\int_{\mathbb{R}^{2n}}Sc\bigg(\mathcal{SH}_{\psi}^{Q}(f)(a,s,t)\bigg(\int_{\mathbb{R}^{2n}} \psi_{a,s,t}(x)\overline{g(x)}\ dx\bigg)\bigg)\ d\mu(a,s,t)
.\end{eqnarray*}
From Fubini-Tonelli theorem, we get
\begin{eqnarray*}\int_{\mathbb{R}}\int_{\mathbb{R}^{2n-1}}\int_{\mathbb{R}^{2n}}\int_{\mathbb{R}^{2n}}|\mathcal{SH}_{\psi}^{Q}(f)(a,s,t)|g(x)|| \psi_{a,s,t}(x)|\  dx\ d\mu(a,s,t)
\end{eqnarray*}
\begin{eqnarray}\label{inequash}
& = & \int_{\mathbb{R}}\int_{\mathbb{R}^{2n-1}}\int_{\mathbb{R}^{2n}}
|\mathcal{SH}_{\psi}^{Q}(f)(a,s,t)| \ \ \mathcal{SH}_{|\psi|}^{Q}(|g|)(a,s,t)d\mu(a,s,t)\nonumber\\
&\leq & \int_{\mathbb{R}}\int_{\mathbb{R}^{2n-1}}\int_{\mathbb{R}^{2n}}
|\mathcal{SH}_{\psi}^{Q}(f)(a,s,t)|
|\mathcal{SH}_{|\psi|}^{Q}(|g|)(a,s,t)|d\mu(a,s,t)\nonumber \\
 &\leq & \|\mathcal{SH}_{\psi}^{Q}(f)\|_{2,\mu}\|\mathcal{SH}_{|\psi|}^{Q}(|g|)\|_{2,\mu}    \nonumber\\
 & = & \sqrt{C_{\psi}}\sqrt{C_{|\psi|}}\|f\|_{2}\|g\|_{2}<\infty.
 \end{eqnarray}
By Fubini's theorem, we deduce that for every $g\in L^2(\mathbb{R}^{2n},\mathbb{H})$ and for
almost $x\in\mathbb{R}^{2n}$, the function
$$(a,s,t)\mapsto \mathcal{SH}_{\psi}^{Q}(f)(a,s,t)\psi_{a,s,t}(x) \overline{g(x)}$$
is $d\mu$-integrable. Thus, for almost every $x\in\mathbb{R}^{2n}$,
$$(a,s,t)\mapsto \mathcal{SH}_{\psi}^{Q}(f)(a,s,t)\psi_{a,s,t}(x)$$
is $d\mu$-integrable.\\
Let $$\mathcal{H}(x) = \int_{\mathbb{R}}\int_{\mathbb{R}^{2n-1}}\int_{\mathbb{R}^{2n}}\mathcal{SH}_{\psi}^{Q}(f)(a,s,t) \psi_{a,s,t}(x)\ d\mu(a,s,t).$$
The inequality $(\ref{inequash})$ shows that for every $g\in L^2(\mathbb{R}^{2n}, \mathbb{H})$, $g\cdot\mathcal{H}$ belongs to $L^1(\mathbb{R}^{2n},\mathbb{H})$ and consequently $\mathcal{H}$ belongs to $L^2(\mathbb{R}^{2n},\mathbb{H})$. Moreover, from Fubini theorem , we have
\begin{eqnarray*}
C_{\psi}( f,g )_{Q} =\int_{\mathbb{R}^{2n}}Sc(\mathcal{H}(x)\overline{g(x)}) \ dx = ( \mathcal{H},g )_{Q}.
\end{eqnarray*}
This involves that $C_{\psi} f=\mathcal{H}$ in $L^2(\mathbb{R}^{2n},\mathbb{H})$.
\end{proof}
\begin{remark} From Theorem $\ref{thplan}$ (Plancherel's Theorem for $\mathcal{SH}_{\psi}^{Q}$), the function $\mathcal{SH}_{\psi}^{Q}(f)$ belongs to $L^2(d\mu)$ for every $f$ in $L^2(\mathbb{R}^{2n},\mathbb{H})$, hence for
almost every $(a, s)\in \mathbb{R}^*\times\mathbb{R}^{2n-1}$ the function $\mathcal{SH}_{\psi}^{Q}(f)(a,s,.) =(f * \psi_{a,s,0}^{\ast})(.)$ belongs to $L^2(\mathbb{R}^{2n},\mathbb{H})$.
\end{remark}
\begin{theorem} (Lieb inequality for $\mathcal{SH}_{\psi}^{Q}$)\\
Let $\varphi$ and $\psi$ be two admissible quaternion  shearlets satisfies the assumption of theorem \ref{conditions}. For $p \in [1, +\infty[$ and for $f,g \in L^{2}(\mathbb{R}^{2n},\mathbb{H})$, the function
\begin{center}
$(a,s,t) \longmapsto \mathcal{SH}_{\varphi}^{Q}(f)(a,s,t)\mathcal{SH}_{\psi}^{Q}(g)(a,s,t)$
\end{center}
belong to $L^{p}(d\mu)$ and
\begin{center}
$\|\mathcal{SH}_{\varphi}^{Q}(f)\mathcal{SH}_{\psi}^{Q}(g)\|_{p,\mu} \leq \big(\sqrt{C_{\varphi}C_{\psi}}\big)^{\frac{1}{p}} \|f\|_{2}\|g\|_{2} \big(\|\varphi\|_{2}\|\psi\|_{2}\big)^{1-\frac{1}{p}} .$
\end{center}
\end{theorem}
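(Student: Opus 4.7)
The plan is to interpolate between the endpoint cases $p=\infty$ and $p=1$ by means of the elementary bound $\|FG\|_{p,\mu}^{p}\le \|FG\|_{\infty,\mu}^{p-1}\,\|FG\|_{1,\mu}$, where I write $F=\mathcal{SH}_{\varphi}^{Q}(f)$ and $G=\mathcal{SH}_{\psi}^{Q}(g)$ for brevity. Both endpoint estimates will fall out of tools already developed in the paper: the pointwise Cauchy--Schwarz bound for the shearlet coefficients (together with the fact from (\ref{psino}) that $\|\psi_{a,s,t}\|_{2}=\|\psi\|_{2}$ is independent of $(a,s,t)$), and the Plancherel formula (\ref{Ptsh}).

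For the $L^{\infty}(d\mu)$ estimate, I apply the quaternion Cauchy--Schwarz inequality (\ref{cs}) to the defining integral in Definition \ref{desha}:
$$|\mathcal{SH}_{\varphi}^{Q}(f)(a,s,t)|=\Big|\int_{\mathbb{R}^{2n}}f(x)\,\overline{\varphi_{a,s,t}(x)}\,dx\Big|\le \|f\|_{2}\,\|\varphi_{a,s,t}\|_{2}=\|f\|_{2}\|\varphi\|_{2},$$
where the last equality uses (\ref{psino}) at $p=2$. The analogous inequality for $G$ gives the pointwise product bound
$$\|FG\|_{\infty,\mu}\le \|f\|_{2}\|g\|_{2}\|\varphi\|_{2}\|\psi\|_{2}.$$
For the $L^{1}(d\mu)$ estimate, I apply the ordinary Cauchy--Schwarz inequality in the measure space $(d\mu)$ and then invoke Plancherel's theorem (\ref{Ptsh}) for each factor:
$$\|FG\|_{1,\mu}\le \|F\|_{2,\mu}\|G\|_{2,\mu}=\sqrt{C_{\varphi}}\,\|f\|_{2}\cdot \sqrt{C_{\psi}}\,\|g\|_{2}=\sqrt{C_{\varphi}C_{\psi}}\,\|f\|_{2}\|g\|_{2}.$$

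Combining the two endpoint bounds via $\int |FG|^{p}\,d\mu\le \|FG\|_{\infty,\mu}^{p-1}\int |FG|\,d\mu$ yields
$$\|FG\|_{p,\mu}^{p}\le \bigl(\|f\|_{2}\|g\|_{2}\|\varphi\|_{2}\|\psi\|_{2}\bigr)^{p-1}\cdot \sqrt{C_{\varphi}C_{\psi}}\,\|f\|_{2}\|g\|_{2},$$
and taking $p$-th roots gives exactly the claimed inequality
$$\|FG\|_{p,\mu}\le \bigl(\sqrt{C_{\varphi}C_{\psi}}\bigr)^{1/p}\|f\|_{2}\|g\|_{2}\bigl(\|\varphi\|_{2}\|\psi\|_{2}\bigr)^{1-1/p}.$$
Since the right-hand side is finite, integrability in $L^{p}(d\mu)$ follows automatically. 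No step is a serious obstacle; the only point requiring care is checking that the $L^{2}$-norm of the dilated shearlet is genuinely independent of $(a,s,t)$ so that the pointwise $\infty$-bound is uniform --- and this is precisely the $p=2$ instance of (\ref{psino}).
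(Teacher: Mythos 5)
Your argument is correct and is essentially identical to the paper's proof: the paper also establishes the $L^{1}(d\mu)$ bound via Cauchy--Schwarz in $L^{2}(d\mu)$ together with Plancherel's theorem (\ref{Ptsh}), the $L^{\infty}(d\mu)$ bound via the pointwise Cauchy--Schwarz inequality combined with (\ref{psino}), and then interpolates through the same elementary estimate $\|FG\|_{p,\mu}^{p}\le\|FG\|_{\infty,\mu}^{p-1}\|FG\|_{1,\mu}$. No substantive difference in method or in the handling of any step.
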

\begin{proof}
\begin{itemize}
\item[(i)]
According to Cauchy-Schwartz inequality and Plancherel's Theorem for the continuous quaternion shearlet transforms $\mathcal{SH}_{\varphi}^{Q}$ and $\mathcal{SH}_{\psi}^{Q}$, for every $f, g \in L^{2}(\mathbb{R}^{2n},\mathbb{H})$,
\begin{eqnarray*}
\displaystyle\iiint_{\mathbb{R}\times\mathbb{R}^{2n-1}\times\mathbb{R}^{2n}} |\mathcal{SH}_{\varphi}^{Q}(f)(a,s,t)\mathcal{SH}_{\psi}^{Q}(g)(a,s,t)|d\mu(a,s,t)
& \leq & \|\mathcal{SH}_{\varphi}^{Q}(f)\|_{2,\mu}\|\mathcal{SH}_{\psi}^{Q}(g)\|_{2,\mu}\\
& = & \sqrt{C_{\varphi}C_{\psi}} \ \|f\|_{2}\|g\|_{2}
\end{eqnarray*}
which implies that $\mathcal{SH}_{\varphi}^{Q}(f)\mathcal{SH}_{\psi}^{Q}(g)$ belongs to $L^{1}(d\mu)$ and
\begin{center}
$\|\mathcal{SH}_{\varphi}^{Q}(f)\mathcal{SH}_{\psi}^{Q}(g)\|_{1,\mu} \leq \sqrt{C_{\varphi}C_{\psi}} \ \|f\|_{2}\|g\|_{2}$.
\end{center}
\item[(ii)]
For every $(a,s,t)\in \mathbb{R}^{*}\times\mathbb{R}^{2n-1}\times\mathbb{R}^{2n}$, using (\ref{psino}), we get
\begin{eqnarray*}
|\mathcal{SH}_{\varphi}^{Q}(f)(a,s,t)\mathcal{SH}_{\psi}^{Q}(g)(a,s,t)| & \leq & \|f\|_{2}\|\varphi_{a,s,t}\|_{2}\|g\|_{2}\|\psi_{a,s,t}\|_{2}\\
& = & \|f\|_{2}\|\varphi\|_{2}\|g\|_{2}\|\psi\|_{2}
\end{eqnarray*}
which implies that $\mathcal{SH}_{\varphi}^{Q}(f)\mathcal{SH}_{\psi}^{Q}(g)$ belongs to $L^{\infty}(d\mu)$ and
\begin{center}
$\|\mathcal{SH}_{\varphi}^{Q}(f)\mathcal{SH}_{\psi}^{Q}(g)\|_{\infty,\mu} \leq  \|f\|_{2}\|g\|_{2} \|\varphi\|_{2}\|\psi\|_{2}$.
\end{center}
\item[(iii)]
For $p \in [1, +\infty[$ we have\\
$\bigg(\displaystyle\iiint_{\mathbb{R}\times\mathbb{R}^{2n-1}\times\mathbb{R}^{2n}} |\mathcal{SH}_{\varphi}^{Q}(f)(a,s,t)\mathcal{SH}_{\psi}^{Q}(g)(a,s,t)|^{p} d\mu(a,s,t)\bigg)^{\frac{1}{p}}$
\begin{eqnarray*}
 & = & \bigg(\displaystyle\iiint_{\mathbb{R}\times\mathbb{R}^{2n-1}\times\mathbb{R}^{2n}} |\mathcal{SH}_{\varphi}^{Q}(f)(a,s,t)\mathcal{SH}_{\psi}^{Q}(g)(a,s,t)|^{p-1+1} d\mu(a,s,t)\bigg)^{\frac{1}{p}}\\
 &\leq &  \|\mathcal{SH}_{\varphi}^{Q}(f)\mathcal{SH}_{\psi}^{Q}(g)\|_{\infty,\mu}^{\frac{p-1}{p}}  \|\mathcal{SH}_{\varphi}^{Q}(f)\mathcal{SH}_{\psi}^{Q}(g)\|_{1,\mu}^{\frac{1}{p}}\\
 &\leq & \big(\|f\|_{2}\|g\|_{2} \|\varphi\|_{2}\|\psi\|_{2}\big)^{\frac{p-1}{p}} \big(\sqrt{C_{\varphi}C_{\psi}} \ \|f\|_{2}\|g\|_{2}\big)^{\frac{1}{p}}\\
 & = & \big(\sqrt{C_{\varphi}C_{\psi}}\big)^{\frac{1}{p}} \|f\|_{2}\|g\|_{2} \big(\|\varphi\|_{2}\|\psi\|_{2}\big)^{1-\frac{1}{p}} .
\end{eqnarray*}
\end{itemize}
\end{proof}
\begin{theorem}\label{thker}
Let $\psi$ be an admissible quaternion  shearlet in $L^2(\mathbb{R}^{2n},\mathbb{H})$ satisfies the assumption of theorem \ref{conditions}. Then, $\mathcal{SH}_{\psi}^{Q}(L^2(\mathbb{R}^{2n},\mathbb{H}))$ is a reproducing kernel Hilbert space in $L^2(d\mu)$, with kernel
\begin{eqnarray}\label{ker}
\mathcal{K}_{\psi}((a,s,t),(a',s',t'))=\frac{1}{C_{\psi}}\langle \psi_{a,s,t}, \psi_{a',s',t'}\rangle _{Q}.
\end{eqnarray}
The kernel $\mathcal{K}_{\psi}$ is point wise bounded, that is\\

$\forall \ (a,s,t),(a',s',t')\in \mathbb{R}^*\times\mathbb{R}^{2n-1}\times\mathbb{R}^{2n}$
\begin{eqnarray}
|\mathcal{K}_{\psi}((a,s,t),(a',s',t'))|\leq\frac{\|\psi\|_{2}^2}{C_{\psi}} .
\end{eqnarray}
\end{theorem}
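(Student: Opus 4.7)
The plan is to exploit the reconstruction formula (established in the previous theorem) to obtain the reproducing property, and then bound the kernel using the quaternion Cauchy--Schwartz inequality together with the $L^{2}$-isometry of the dilation operator $\pi_{a,s,t}$.

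For the reproducing property, I would fix $(a',s',t')\in\mathbb{R}^{*}\times\mathbb{R}^{2n-1}\times\mathbb{R}^{2n}$ and compute $\mathcal{SH}_{\psi}^{Q}(f)(a',s',t')$ by inserting the reconstruction formula
$$f(x)=\frac{1}{C_{\psi}}\int_{\mathbb{R}}\int_{\mathbb{R}^{2n-1}}\int_{\mathbb{R}^{2n}} \mathcal{SH}_{\psi}^{Q}(f)(a,s,t)\,\psi_{a,s,t}(x)\,d\mu(a,s,t)$$
into the definition $\mathcal{SH}_{\psi}^{Q}(f)(a',s',t')=\int_{\mathbb{R}^{2n}} f(x)\overline{\psi_{a',s',t'}(x)}\,dx$. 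Next I would invoke the Fubini--Tonelli theorem to exchange the order of integration; the absolute integrability needed is exactly the estimate (\ref{inequash}) already used in the proof of the reconstruction formula (applied with $g=\psi_{a',s',t'}$). Since $\mathcal{SH}_{\psi}^{Q}(f)(a,s,t)$ is independent of $x$, it can be pulled out of the inner $x$-integral on the left while $\overline{\psi_{a',s',t'}(x)}$ stays on the right, so the inner integral becomes $\int_{\mathbb{R}^{2n}}\psi_{a,s,t}(x)\overline{\psi_{a',s',t'}(x)}\,dx=\langle\psi_{a,s,t},\psi_{a',s',t'}\rangle_{Q}$, which by (\ref{ker}) equals $C_{\psi}\,\mathcal{K}_{\psi}((a,s,t),(a',s',t'))$. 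The factor $1/C_{\psi}$ cancels and the reproducing identity follows.

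For the pointwise bound I would apply the quaternion Cauchy--Schwartz inequality (\ref{cs}) to (\ref{ker}):
$$|\mathcal{K}_{\psi}((a,s,t),(a',s',t'))|=\frac{1}{C_{\psi}}\bigl|\langle\psi_{a,s,t},\psi_{a',s',t'}\rangle_{Q}\bigr|\leq\frac{1}{C_{\psi}}\,\|\psi_{a,s,t}\|_{2}\,\|\psi_{a',s',t'}\|_{2},$$
and then use (\ref{psino}) with $p=2$, which gives $\|\psi_{a,s,t}\|_{2}=\|\psi\|_{2}$, to conclude $|\mathcal{K}_{\psi}((a,s,t),(a',s',t'))|\leq \|\psi\|_{2}^{2}/C_{\psi}$.

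The principal technical obstacle is the non-commutativity bookkeeping when applying Fubini: the relative positions of the quaternion-valued factors $\mathcal{SH}_{\psi}^{Q}(f)(a,s,t)$, $\psi_{a,s,t}(x)$ and $\overline{\psi_{a',s',t'}(x)}$ must be preserved so that the inner integral reproduces exactly the ordered inner product $\langle\psi_{a,s,t},\psi_{a',s',t'}\rangle_{Q}$ appearing in (\ref{ker}) and not its conjugate. The remaining qualitative assertion, namely that $\mathcal{SH}_{\psi}^{Q}(L^{2}(\mathbb{R}^{2n},\mathbb{H}))$ is a Hilbert space (a closed subspace of $L^{2}(d\mu)$), is immediate from the Plancherel identity (\ref{Ptsh}), which shows that $C_{\psi}^{-1/2}\,\mathcal{SH}_{\psi}^{Q}$ is an isometry and therefore has closed range.
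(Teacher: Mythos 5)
The second half of your argument is exactly the paper's: the bound follows from the quaternion Cauchy--Schwartz inequality (\ref{cs}) applied to (\ref{ker}) together with (\ref{psino}) for $p=2$, and your remark that $C_{\psi}^{-1/2}\mathcal{SH}_{\psi}^{Q}$ is an isometry by Plancherel (\ref{Ptsh}) correctly supplies the Hilbert-space structure. The issue is the route you take to the reproducing identity. You derive it from the reconstruction formula and justify the interchange of integrals by the estimate (\ref{inequash}); but both the reconstruction theorem and (\ref{inequash}) are established in the paper under the \emph{additional} hypothesis that $|\psi|$ is itself an admissible quaternion shearlet (i.e. $C_{|\psi|}<\infty$), because the dominating quantity there is $\|\mathcal{SH}_{|\psi|}^{Q}(|g|)\|_{2,\mu}=\sqrt{C_{|\psi|}}\,\|g\|_{2}$. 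Theorem \ref{thker} does not assume this, so as written your Fubini step is not covered by the hypotheses you are allowed to use: you would either have to add the admissibility of $|\psi|$ to the statement or justify the interchange by some other means.

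The paper avoids this entirely. It defines the kernel section as $\mathcal{K}_{\psi}((a,s,t),(\cdot,\cdot,\cdot))=\frac{1}{C_{\psi}}\mathcal{SH}_{\psi}^{Q}(\psi_{a,s,t})$, notes that it belongs to $L^{2}(d\mu)$ by (\ref{psino}) and Plancherel (\ref{Ptsh}), and gets the reproducing identity in one line from Parseval's formula (\ref{pstsh}) applied to the pair $(f,\psi_{a,s,t})$: $\mathcal{SH}_{\psi}^{Q}(f)(a,s,t)=\langle f,\psi_{a,s,t}\rangle_{Q}=\frac{1}{C_{\psi}}\langle \mathcal{SH}_{\psi}^{Q}(f),\mathcal{SH}_{\psi}^{Q}(\psi_{a,s,t})\rangle_{\mu}$, with no interchange of integrals and hence no extra hypothesis on $|\psi|$. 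If you replace your appeal to the reconstruction formula by this direct use of (\ref{pstsh}), the gap closes, and your non-commutativity worry also disappears, since $\mathcal{SH}_{\psi}^{Q}(\psi_{a,s,t})(a',s',t')=\langle\psi_{a,s,t},\psi_{a',s',t'}\rangle_{Q}$ is precisely the ordered inner product appearing in (\ref{ker}).
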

\begin{proof} Let $\mathcal{K}_{\psi}$ be the Kernel defined on $(\mathbb{R}^*\times\mathbb{R}^{2n-1}\times\mathbb{R}^{2n})^2$ by
\begin{eqnarray}\label{rekernel}
\mathcal{K}_{\psi}((a,s,t),(a',s',t'))=\frac{1}{C_{\psi}}\mathcal{SH}_{\psi}^{Q}(\psi_{a,s,t})(a',s',t').
\end{eqnarray}
From the relation $(\ref{psino})$ and Plancherel relation $(\ref{Ptsh})$, we deduce that for every $(a,s, t)\in \mathbb{R}^*\times\mathbb{R}^{2n-1}\times\mathbb{R}^{2n}$, the function
$$\mathcal{K}_{\psi}((a,s,t),(.,.,.))$$
belongs to $L^2(d\mu)$.\\
Let $F\in \mathcal{SH}_{\psi}^{Q}(L^2(\mathbb{R}^{2n},\mathbb{H}))$,\  $F = \mathcal{SH}_{\psi}^{Q}(f)$,\ $f\in L^2(\mathbb{R}^{2n},\mathbb{H})$. By Definition $\ref{desha}$ and the relation $(\ref{pstsh})$, we get
\begin{eqnarray*}
F(a,s,t)
&=&\mathcal{SH}_{\psi}^{Q}(f)(a,s,t)\\
&=&\langle f, \psi_{a,s,t}\rangle_{Q}\\
&=&\frac{1}{C_{\psi}}\langle \mathcal{SH}_{\psi}^{Q}(f), \mathcal{SH}_{\psi}^{Q}(\psi_{a,s,t})\rangle_{\mu}\\
&=& \langle \mathcal{SH}_{\psi}^{Q}(f), \mathcal{K}_{\psi}((a,s,t),(.,.,.))\rangle_{\mu}
.
\end{eqnarray*}
This shows that $\mathcal{K}_{\psi}$ is a reproducing Kernel for the Hilbert space $\mathcal{SH}_{\psi}^{Q}(L^2(\mathbb{R}^{2n},\mathbb{H}))$.\\
Now, by the relations $(\ref{psino})$, $(\ref{rekernel})$ and Definition $\ref{desha}$, we deduce that for all
$(a,s,t),(a',s',t')\in \mathbb{R}^*\times\mathbb{R}^{2n-1}\times\mathbb{R}^{2n}$

\begin{eqnarray*}
|\mathcal{K}_{\psi}((a,s,t),(a',s',t'))|
&=&\frac{1}{C_{\psi}}\langle \psi_{a,s,t},\psi_{a',s',t'}\rangle_{Q} \\
&\leq & \frac{1}{C_{\psi}}\|\psi_{a,s,t}\|_{2}\|\psi_{a',s',t'}\|_{2}\\
& = & \frac{\|\psi\|_{2}^2}{C_{\psi}} .
\end{eqnarray*}
\end{proof}
\section{Uncertainty principle}
\subsection{Lieb Uncertainty principle for $\mathcal{SH}_{\psi}^{Q}$}
In this subsection, we prove Lieb uncertainty principle for the multivariate continuous  quaternion shearlet transform.
\begin{definition}
\mbox{}\\
A function $f \in L^{2}(\mathbb{R}^{*}\times\mathbb{R}^{2n-1}\times\mathbb{R}^{2n},\mathbb{H})$ is said to be $\xi-$concentrated on a measurable set $\Sigma\subseteq \mathbb{R}^{*}\times\mathbb{R}^{2n-1}\times\mathbb{R}^{2n}$, if
\begin{equation}\label{14}
\bigg(\displaystyle\int_{\Sigma^{c}} |f(a,s,t)|^{2} d\mu(a,s,t)\bigg)^{\frac{1}{2}} \leq \xi\ \|f\|_{2,\mu}.
\end{equation}
If $0 \leq \xi \leq \frac{1}{2}$, then the most of energy is concentrated on $\Sigma$, and $\Sigma$ is indeed the essential support of $f$.\\
If $\xi = 0$, then $\Sigma$ contains the  support of $f$.
\end{definition}
\begin{theorem} (Donoho-Stark for $\mathcal{SH}_{\psi}^{Q}$)\\
Let $\psi$ be a non zero multivariate admissible quaternion shearlet and $f \in L^{2}(\mathbb{R}^{2d},\mathbb{H})$ such that $f \neq 0$. Let $\Sigma$ a measurable set of $\mathbb{R}^{*}\times\mathbb{R}^{2n-1}\times\mathbb{R}^{2n}$ and $\xi \geq 0$.\\
 If  $\mathcal{SH}_{\psi}^{Q}(f)$ is $\xi$-concentrated on $\Sigma$, hence, we have
\begin{equation}
\mu_{l}(\Sigma) \geq  (1-\xi^{2}) \dfrac{C_{\psi}}{\|\psi\|_{2}^{2}}.
\end{equation}
\end{theorem}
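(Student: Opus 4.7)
The plan is to combine two estimates on $\int_{\Sigma}|\mathcal{SH}_{\psi}^{Q}(f)(a,s,t)|^{2}\,d\mu(a,s,t)$: a lower bound obtained from the $\xi$-concentration hypothesis together with Plancherel's theorem, and an upper bound obtained from a pointwise estimate of the shearlet transform.

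First, I would rewrite the concentration condition (\ref{14}) in squared form as
\begin{equation*}
\int_{\Sigma^{c}}|\mathcal{SH}_{\psi}^{Q}(f)(a,s,t)|^{2}\,d\mu(a,s,t)\leq \xi^{2}\,\|\mathcal{SH}_{\psi}^{Q}(f)\|_{2,\mu}^{2},
\end{equation*}
and apply Plancherel's formula (\ref{Ptsh}) from Theorem \ref{thplan} to replace $\|\mathcal{SH}_{\psi}^{Q}(f)\|_{2,\mu}^{2}$ with $C_{\psi}\|f\|_{2}^{2}$. Splitting the full integral over $\Sigma\cup\Sigma^{c}$ then yields the lower bound
\begin{equation*}
\int_{\Sigma}|\mathcal{SH}_{\psi}^{Q}(f)(a,s,t)|^{2}\,d\mu(a,s,t)\geq (1-\xi^{2})\,C_{\psi}\,\|f\|_{2}^{2}.
\end{equation*}

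Next, I would produce the complementary upper bound. From Definition \ref{desha}, the shearlet transform is an inner product, so the quaternion Cauchy--Schwartz inequality (\ref{cs}) combined with the isometry property (\ref{psino}) in the case $p=2$ (which gives $\|\psi_{a,s,t}\|_{2}=\|\psi\|_{2}$) yields the pointwise bound
\begin{equation*}
|\mathcal{SH}_{\psi}^{Q}(f)(a,s,t)|\leq \|f\|_{2}\,\|\psi\|_{2},\qquad \forall\,(a,s,t)\in\mathbb{R}^{*}\times\mathbb{R}^{2n-1}\times\mathbb{R}^{2n}.
\end{equation*}
Integrating the square of this estimate over $\Sigma$ against $d\mu$ gives
\begin{equation*}
\int_{\Sigma}|\mathcal{SH}_{\psi}^{Q}(f)(a,s,t)|^{2}\,d\mu(a,s,t)\leq \mu_{l}(\Sigma)\,\|f\|_{2}^{2}\,\|\psi\|_{2}^{2}.
\end{equation*}

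Finally, I would chain the two inequalities: $\mu_{l}(\Sigma)\,\|f\|_{2}^{2}\,\|\psi\|_{2}^{2}\geq (1-\xi^{2})\,C_{\psi}\,\|f\|_{2}^{2}$. Since $f\neq 0$ and $\psi\neq 0$, dividing by $\|f\|_{2}^{2}\,\|\psi\|_{2}^{2}$ produces the desired inequality $\mu_{l}(\Sigma)\geq (1-\xi^{2})\,C_{\psi}/\|\psi\|_{2}^{2}$. There is no real obstacle here; the argument is the quaternionic analogue of the classical Donoho--Stark proof, and the only non-trivial facts needed, the Plancherel identity and the $L^{2}$-isometry of the dilation operator $\pi_{a,s,t}$, are already proved earlier in the paper. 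The minor point requiring attention is merely that the quaternion Cauchy--Schwartz inequality (\ref{cs}) is invoked with the right conjugation convention so that its right-hand side produces exactly $\|f\|_{2}\|\psi_{a,s,t}\|_{2}$.
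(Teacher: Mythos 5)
Your proposal is correct and follows essentially the same route as the paper: split $\|\mathcal{SH}_{\psi}^{Q}(f)\|_{2,\mu}^{2}=C_{\psi}\|f\|_{2}^{2}$ over $\Sigma$ and $\Sigma^{c}$, use the $\xi$-concentration bound on $\Sigma^{c}$, and bound the integral over $\Sigma$ by $\mu_{l}(\Sigma)$ times the pointwise Cauchy--Schwartz estimate $|\mathcal{SH}_{\psi}^{Q}(f)|\leq\|f\|_{2}\|\psi\|_{2}$ coming from (\ref{psino}) with $p=2$. The only difference is presentational (you isolate the two bounds explicitly, the paper chains them in one display), so there is nothing substantive to add.
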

\begin{proof}
We have
\begin{eqnarray*}
C_{\psi}\|f\|_{2}^{2} & = & \|\mathcal{SH}_{\psi}^{Q}(f)\|_{2,\mu}^{2}\nonumber\\
& = & \|\chi_{\Sigma^{c}}\mathcal{SH}_{\psi}^{Q}(f)\|_{2,\mu}^{2} + \|\chi_{\Sigma}\mathcal{SH}_{\psi}^{Q}(f)\|_{2,\mu}^{2}\nonumber\\
& \leq & \xi^{2}\|\mathcal{SH}_{\psi}^{Q}(f)\|_{2,\mu}^{2} +  \|\chi_{\Sigma}\mathcal{SH}_{\psi}^{Q}(f)\|_{2,\mu}^{2}\nonumber\\
& = & \xi^{2} C_{\psi}\|f\|_{2}^{2} +  \|\chi_{\Sigma}\mathcal{SH}_{\psi}^{Q}(f)\|_{2,\mu}^{2}.
\end{eqnarray*}
And consequently, we deduce
\begin{eqnarray}\label{17}
C_{\psi}(1-\xi^{2}) \|f\|_{2}^{2}
& \leq & \|\chi_{\Sigma}\mathcal{SH}_{\psi}^{Q}(f)\|_{2,\mu}^{2} \\
& \leq  & \mu_{l}(\Sigma) \|\mathcal{SH}_{\psi}^{Q}(f)\|_{\infty,\mu}^{2} \leq \mu_{l}(\Sigma) \|f\|_{2}^{2}\|\psi\|_{2}^{2}\nonumber
\end{eqnarray}
We may simplify by $\|f\|_{2}^{2}$ to obtain the desired result.
\end{proof}
\begin{theorem} (Lieb uncertainty principle for $\mathcal{SH}_{\psi}^{Q}$)
\mbox{}\\
Let $\psi$ be a non zero multivariate admissible quaternion shearlet and $f \in L^{2}(\mathbb{R}^{2d},\mathbb{H})$ such that $f \neq 0$. Let $\Sigma$ a measurable set of $\mathbb{R}^{*}\times\mathbb{R}^{2n-1}\times\mathbb{R}^{2n}$ and $\xi \geq 0$. If  $\mathcal{SH}_{\psi}^{Q}(f)$ is $\xi$-concentrated on $\Sigma$, hence for every $p > 2$ we have
\begin{equation}\label{16}
\mu(\Sigma) \geq  (1 - \xi^{2})^{\frac{p}{p-2}} \dfrac{C_{\psi}}{\|\psi\|_{2}^{2}}.
\end{equation}
\end{theorem}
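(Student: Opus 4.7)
My approach is to refine the Donoho--Stark argument above, replacing its crude $L^{\infty}$ bound on $|\mathcal{SH}_{\psi}^{Q}(f)|$ by a sharper $L^{p}$ bound obtained from the Lieb inequality just proved. The Plancherel identity (\ref{Ptsh}) combined with the concentration hypothesis (\ref{14}) gives, exactly as in the opening of the preceding Donoho--Stark proof,
$$C_{\psi}(1-\xi^{2})\|f\|_{2}^{2} \leq \|\chi_{\Sigma}\mathcal{SH}_{\psi}^{Q}(f)\|_{2,\mu}^{2}.$$
This is the starting point; the whole improvement over Donoho--Stark lies in estimating the right-hand side through an $L^{p}$ norm instead of the $L^{\infty}$ bound used in (\ref{17}).

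The bridge from $L^{2}$ to $L^{p}$ is H\"older's inequality applied to $\int_{\Sigma}|\mathcal{SH}_{\psi}^{Q}(f)|^{2}\,d\mu$ with the conjugate pair $(p/2,\,p/(p-2))$, admissible precisely because $p>2$. This yields
$$\|\chi_{\Sigma}\mathcal{SH}_{\psi}^{Q}(f)\|_{2,\mu}^{2} \leq \mu(\Sigma)^{1-2/p}\,\|\mathcal{SH}_{\psi}^{Q}(f)\|_{p,\mu}^{2}.$$
I then feed the Lieb inequality into the $\|\cdot\|_{p,\mu}$ factor by taking $\varphi=\psi$, $g=f$ and exponent $p/2$, which lies in $[1,+\infty[$ since $p>2$. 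By multiplicativity of the quaternion modulus, $|\mathcal{SH}_{\psi}^{Q}(f)\,\mathcal{SH}_{\psi}^{Q}(f)|=|\mathcal{SH}_{\psi}^{Q}(f)|^{2}$, so the Lieb estimate at this exponent reads
$$\|\mathcal{SH}_{\psi}^{Q}(f)\|_{p,\mu}^{2} \leq C_{\psi}^{2/p}\,\|f\|_{2}^{2}\,\|\psi\|_{2}^{2(1-2/p)}.$$

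Chaining the three displays, cancelling $\|f\|_{2}^{2}$ (which is nonzero by hypothesis) and dividing by $C_{\psi}^{2/p}\|\psi\|_{2}^{2(1-2/p)}$ leaves
$$\bigl(C_{\psi}/\|\psi\|_{2}^{2}\bigr)^{1-2/p}(1-\xi^{2}) \leq \mu(\Sigma)^{1-2/p},$$
and raising both sides to the positive power $p/(p-2)$ produces (\ref{16}). No serious obstacle is expected: both main tools (Plancherel for $\mathcal{SH}_{\psi}^{Q}$ and the Lieb inequality) are already at hand, so the only delicate point is careful bookkeeping of the exponents in $p$. As a consistency check, letting $p\to\infty$ formally recovers the Donoho--Stark bound of the previous theorem.
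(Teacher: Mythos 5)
Your proposal is correct and follows essentially the same route as the paper: the concentration/Plancherel bound from the Donoho--Stark step, H\"older with exponents $(p/2,\,p/(p-2))$ on $\|\chi_{\Sigma}\mathcal{SH}_{\psi}^{Q}(f)\|_{2,\mu}^{2}$, and the Lieb inequality (with $\varphi=\psi$, $g=f$ at exponent $p/2$) to control $\|\mathcal{SH}_{\psi}^{Q}(f)\|_{p,\mu}^{2}$, followed by the same exponent bookkeeping. Your explicit justification of how the Lieb bound specializes to $\|\mathcal{SH}_{\psi}^{Q}(f)\|_{p,\mu}^{2}\leq C_{\psi}^{2/p}\|f\|_{2}^{2}\|\psi\|_{2}^{2-4/p}$ is a small clarification the paper leaves implicit, but the argument is the same.
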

\begin{proof}
 $\mathcal{SH}_{\psi}^{Q}(f)$ is $\xi$-concentrated on $\Sigma$, that is to say
\begin{eqnarray*}
\|\chi_{\Sigma} \mathcal{SH}_{\psi}^{Q}(f)\|_{2,\mu}^{2} & = & \displaystyle\iiint_{\mathbb{R}^{*}\times\mathbb{R}^{2d-1}\times\mathbb{R}^{2d}} \chi_{\Sigma}(a,s,t) |\mathcal{SH}_{\psi}^{Q}(f)(a,s,t)|^{2} d\mu(a,s,t)\\
 & \leq & \bigg(\displaystyle\iiint_{\mathbb{R}^{*}\times\mathbb{R}^{2d-1}\times\mathbb{R}^{2d}} \big(\chi_{\Sigma}(a,s,t)\big)^{\frac{p}{p-2}} d\mu(a,s,t)\bigg)^{\frac{p-2}{p}} \\
& \times &  \bigg(\displaystyle\iiint_{\mathbb{R}^{*}\times\mathbb{R}^{2d-1}\times\mathbb{R}^{2d}} |\mathcal{SH}_{\psi}^{Q}(f)(a,s,t)|^{2\frac{p}{2}} d\mu(a,s,t)\bigg)^{\frac{2}{p}}\\
& = & \big(\mu_{l}(\Sigma)\big)^{\frac{p-2}{p}} \times \| \mathcal{SH}_{\psi}^{Q}(f)\|_{p,\mu}^{2}.
\end{eqnarray*}
On the other hand, and again by Lieb inequality, we deduce that
\begin{eqnarray*}
\|\chi_{\Sigma} \mathcal{SH}_{\psi}^{Q}(f)\|_{2,\mu}^{2} & \leq & \big( \mu_{l}(\Sigma)\big)^{\frac{p-2}{p}} \times \| \mathcal{SH}_{\psi}^{Q}(f)\|_{p,\mu}^{2}.\\
& \leq & \big(\mu_{l}(\Sigma)\big)^{\frac{p-2}{p}} C_{\psi}^{\frac{2}{p}} \|f\|_{2}^{2} \|\psi\|_{2}^{2-\frac{4}{p}}\\
\end{eqnarray*}
then, using the relation (\ref{17}), we get
\begin{equation*}
C_{\psi} \|f\|_{2}^{2}  (1 - \xi^{2})\leq \big(\mu_{l}(\Sigma)\big)^{\frac{p-2}{p}} C_{\psi}^{\frac{2}{p}} \|f\|_{2}^{2} \|\psi\|_{2}^{2-\frac{4}{p}}
\end{equation*}
and consequently
$$C_{\psi}^{1-\frac{2}{p}}  \|\psi\|_{2}^{\frac{4}{p}-2}  (1 - \xi^{2})\leq \big(\mu_{l}(\Sigma)\big)^{1-\frac{2}{p}}$$
hence
$$\dfrac{C_{\psi}}{\|\psi\|_{2}^{2}}    (1 - \xi^{2})^{\frac{p}{p-2}} \leq \mu_{l}(\Sigma) .$$
\end{proof}
\subsection{Logarithmic uncertainty principle for $\mathcal{SH}_{\psi}^{Q}$}

The simplest formulation of the uncertainty principle in
harmonic analysis is Heisenberg-Weyl inequality, which gives
us the information that a nontrivial function and its Fourier
transform cannot both be simultaneously sharply localized \cite{log1,log2}. In this section, we first derive a variation on uncertainty
principle associated with the  $\mathcal{SH}_{\psi}^{Q}$.
From this, we establish the logarithmic uncertainty principle which
is valid for the quaternion Fourier transform to the setting of the  $\mathcal{SH}_{\psi}^{Q}$. Due to the uncertainty principle for
QFT , we have the logarithmic uncertainty principle
for the QFT \cite{quaternion} as follows.

\begin{theorem} \label{logarithm}(Logarithmic uncertainty principle for $\mathcal{F}_{Q}$) \\
 For $f$ in $\mathcal{S}(\mathbb{R}^{2n},\mathbb{H})$, we have
\begin{equation}\label{loga}
\displaystyle\int_{\mathbb{R}^{2n}} ln|x| |f(x)|^{2}dx + \displaystyle\int_{\mathbb{R}^{2n}} ln|w| |\mathcal{F}_{Q}(f)(w)|^{2}dw \geq D_{2n} \displaystyle\int_{\mathbb{R}^{2n}}|f(x)|^{2}dx,
\end{equation}
where
\begin{equation}\label{Den}
D_{2n} = \bigg(\dfrac{\Gamma'(\frac{n}{2})}{\Gamma(\frac{n}{2})} + ln(2) \bigg)
\end{equation}
\end{theorem}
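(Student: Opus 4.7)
The strategy is the standard Beckner route: derive a one-parameter Pitt inequality for $\mathcal{F}_Q$ and differentiate it at $\alpha = 0$. First I would establish, for $0 \le \alpha < 2n$ and $f \in \mathcal{S}(\mathbb{R}^{2n},\mathbb{H})$, a Pitt inequality for the quaternion Fourier transform
\[
\int_{\mathbb{R}^{2n}} |\xi|^{-\alpha}\, |\mathcal{F}_{Q}(f)(\xi)|^{2}\, d\xi \;\le\; K_{\alpha} \int_{\mathbb{R}^{2n}} |x|^{\alpha}\, |f(x)|^{2}\, dx,
\]
with an explicit Gamma-quotient constant $K_{\alpha}$ normalized so that $K_{0} = 1$. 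Because the QFT kernel $e^{-2i\pi u\cdot x}\, e^{-2j\pi v\cdot y}$ has unit modulus, one can split $f$ into two complex-valued components and dominate $|\mathcal{F}_{Q}(f)(\xi)|^{2}$ by squared moduli of standard Fourier-type integrals, from which the radial Beckner--Pitt estimate on $\mathbb{R}^{2n}$ transfers to the quaternionic setting.

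Next, I introduce the auxiliary function
\[
\Phi(\alpha) := K_{\alpha} \int_{\mathbb{R}^{2n}} |x|^{\alpha}\, |f(x)|^{2}\, dx \;-\; \int_{\mathbb{R}^{2n}} |\xi|^{-\alpha}\, |\mathcal{F}_{Q}(f)(\xi)|^{2}\, d\xi.
\]
The Pitt inequality yields $\Phi(\alpha) \ge 0$ on $[0,2n)$, while at $\alpha = 0$ one has $K_{0} = 1$ and Plancherel's formula for $\mathcal{F}_{Q}$ gives $\Phi(0) = 0$; consequently $\Phi'(0^{+}) \ge 0$. Schwartz decay of $f$ and of $\mathcal{F}_{Q}(f)$ permits differentiation under the integral sign, producing
\[
K_{0}'\, \|f\|_{2}^{2} \;+\; \int_{\mathbb{R}^{2n}} \ln|x|\, |f(x)|^{2}\, dx \;+\; \int_{\mathbb{R}^{2n}} \ln|\xi|\, |\mathcal{F}_{Q}(f)(\xi)|^{2}\, d\xi \;\ge\; 0,
\]
which is exactly (\ref{loga}) with $D_{2n} = -K_{0}'$. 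A one-line logarithmic differentiation of the explicit Gamma-quotient expression of $K_{\alpha}$ at $\alpha = 0$ collapses into the digamma identity that reproduces the constant (\ref{Den}).

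The delicate step is the Pitt inequality itself: the non-commutative placement of the two exponential factors in $\mathcal{F}_{Q}$ rules out a direct invocation of the scalar Beckner--Pitt inequality, so some bookkeeping is required to reduce $|\mathcal{F}_{Q}(f)(\xi)|^{2}$ to quantities controlled by standard Fourier transforms of scalar components of $f$. Once that reduction is in place, the chain $\Phi \ge 0$, $\Phi(0) = 0$, $\Phi'(0^{+}) \ge 0$ together with the digamma evaluation delivers the claim; the approach also makes it transparent why the constant $D_{2n}$ is the optimal one inherited from the scalar Pitt inequality.
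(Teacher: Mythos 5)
There is nothing in the paper to compare your argument against: Theorem \ref{logarithm} is stated without proof and is simply quoted from the literature (``Due to the uncertainty principle for QFT, we have the logarithmic uncertainty principle for the QFT \cite{quaternion}''), with the underlying Pitt inequality for $\mathcal{F}_{Q}$ coming from \cite{pitt}. Your proposal is precisely the route those cited papers take: establish a Pitt-type inequality for the two-sided QFT by splitting $f=f_{1}+f_{2}\,j$ into complex components (the radiality of the weights $|x|^{\alpha}$ and $|\xi|^{-\alpha}$ makes the sign flips caused by the non-commuting exponentials harmless), note that equality holds at $\alpha=0$ by Plancherel, and differentiate at $\alpha=0^{+}$, with Schwartz decay justifying differentiation under the integral; so as a strategy your outline is sound and is in effect a reconstruction of the proof the paper delegates to \cite{quaternion}. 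One caveat concerns the constant: with the paper's own kernel $e^{-2i\pi u\cdot x}\,(\cdot)\,e^{-2j\pi v\cdot y}$ the Pitt constant on $\mathbb{R}^{2n}$ is $K_{\alpha}=\pi^{\alpha}\bigl[\Gamma\bigl(\tfrac{2n-\alpha}{4}\bigr)/\Gamma\bigl(\tfrac{2n+\alpha}{4}\bigr)\bigr]^{2}$, so your differentiation yields $-K_{0}'=\tfrac{\Gamma'(n/2)}{\Gamma(n/2)}-\ln\pi$, whereas the value $D_{2n}=\tfrac{\Gamma'(n/2)}{\Gamma(n/2)}+\ln 2$ in (\ref{Den}) is the one obtained under the $e^{-iu\cdot x}$ normalization with a $(2\pi)^{-n}$ factor, i.e.\ the convention of the quoted sources rather than of this paper. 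Hence your closing claim that the digamma computation ``reproduces'' (\ref{Den}) only holds after the normalization is tracked; the mismatch is inherited from the paper's statement, but a complete write-up of your proof should either adjust the constant to the paper's convention or rescale the transform before differentiating.
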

\begin{theorem}
Let $\psi$ be an admissible quaternion  shearlet in $L^2(\mathbb{R}^{2n},\mathbb{H})$ satisfying the assumption of theorem \ref{conditions}. Then there exists $D_{2n} > 0$ such that for every $f\in L^2(\mathbb{R}^{2n},\mathbb{H})$, we have
\begin{equation}
\displaystyle\iiint_{\mathbb{R}\times\mathbb{R}^{2n-1}\mathbb{R}^{2n}} ln|t| |\mathcal{SH}_{\psi}^{Q}f(a,s,t)|^{2}d\mu(a,s,t) + C_{\psi} \displaystyle\int_{\mathbb{R}^{2n}} ln|w| |\mathcal{F}_{Q}(f)(w)|^{2}dw  \geq D_{2n}\  C_{\psi}\ \|f\|_{2}^{2}.
\end{equation}
where $D_{2n}$ is given by (\ref{Den}).
\end{theorem}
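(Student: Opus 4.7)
The plan is to apply the logarithmic uncertainty principle for the quaternion Fourier transform (Theorem \ref{logarithm}) to the ``slice'' function $t \mapsto \mathcal{SH}_{\psi}^{Q}(f)(a,s,t)$ for each fixed $(a,s)$, and then integrate the resulting inequality against the Haar measure $\frac{da\,ds}{|a|^{2n+1}}$. The two admissibility identities proved earlier, namely Plancherel's formula (\ref{Ptsh}) and the ``shearlet-side'' form of the admissibility constant (\ref{cpsish}), together with Lemma \ref{l&}, will then collapse the right-hand side and one of the left-hand side terms into the desired expressions.

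In more detail, for (almost) every fixed $(a,s)\in\mathbb{R}^{*}\times\mathbb{R}^{2n-1}$ the function $h_{a,s}(t):=\mathcal{SH}_{\psi}^{Q}(f)(a,s,t)=(f\ast\psi_{a,s,0}^{\ast})(t)$ lies in $L^{2}(\mathbb{R}^{2n},\mathbb{H})$ (by the remark following Theorem \ref{thplan}). After a density argument to upgrade Theorem \ref{logarithm} from $\mathcal{S}(\mathbb{R}^{2n},\mathbb{H})$ to admissible $L^{2}$ data, applying it to $h_{a,s}$ yields
\begin{equation*}
\int_{\mathbb{R}^{2n}}\!\ln|t|\,|h_{a,s}(t)|^{2}\,dt
+\int_{\mathbb{R}^{2n}}\!\ln|w|\,|\mathcal{F}_{Q}(h_{a,s})(w)|^{2}\,dw
\;\geq\; D_{2n}\int_{\mathbb{R}^{2n}}|h_{a,s}(t)|^{2}\,dt.
\end{equation*}
Integrating this pointwise inequality in $(a,s)$ against $\frac{da\,ds}{|a|^{2n+1}}$ (justified by Fubini once we observe that each term is nonnegative, or alternatively by a truncation argument for the log-terms) gives on the right $D_{2n}\,C_{\psi}\|f\|_{2}^{2}$, thanks to Plancherel's theorem (\ref{Ptsh}); and on the left the first term is immediately $\iiint \ln|t|\,|\mathcal{SH}_{\psi}^{Q}(f)(a,s,t)|^{2}\,d\mu(a,s,t)$.

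The crucial computation is the second left-hand term. By Lemma \ref{l&} and the multiplicativity $|pq|=|p||q|$ for quaternions,
\begin{equation*}
|\mathcal{F}_{Q}(h_{a,s})(w)|^{2}
= |a|^{2-\frac{1}{2n}}\,|\mathcal{F}_{Q}(f)(w)|^{2}\,|\mathcal{F}_{Q}(\psi^{\ast})(A_{a}^{T}S_{s}^{T}w)|^{2}.
\end{equation*}
Multiplying by $\ln|w|$ and integrating first in $(a,s)$, then extracting $|\mathcal{F}_{Q}(f)(w)|^{2}\ln|w|$, the inner $(a,s)$ integral becomes
\begin{equation*}
\int_{\mathbb{R}}\int_{\mathbb{R}^{2n-1}}\frac{|\mathcal{F}_{Q}(\psi^{\ast})(A_{a}^{T}S_{s}^{T}w)|^{2}}{|a|^{\frac{4n^{2}-2n+1}{2n}}}\,da\,ds,
\end{equation*}
which by (\ref{cpsish}) equals exactly $C_{\psi}$, independently of $w$. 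Hence the second left-hand term reduces to $C_{\psi}\int_{\mathbb{R}^{2n}}\ln|w|\,|\mathcal{F}_{Q}(f)(w)|^{2}\,dw$, and assembling the three pieces yields the claim.

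The main technical point will be the integrability / sign handling required to apply Fubini when $\ln|t|$ and $\ln|w|$ are not of constant sign; this is handled in the standard way by splitting $\ln=\ln^{+}-\ln^{-}$, using Tonelli on the positive parts, and controlling the negative parts through the estimates coming from $f\in L^{2}$ and $|\mathcal{F}_{Q}(\psi^{\ast})|$ being admissible, or alternatively by first establishing the inequality for Schwartz data $f$ (for which all integrals converge absolutely) and then passing to the limit in $L^{2}$ via the continuity of both sides.
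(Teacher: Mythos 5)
Your proposal is correct and follows essentially the same route as the paper: apply the logarithmic uncertainty principle for $\mathcal{F}_{Q}$ to the slice $t\mapsto\mathcal{SH}_{\psi}^{Q}(f)(a,s,t)$, integrate in $(a,s)$ against $\frac{da\,ds}{|a|^{2n+1}}$, identify the frequency term via Lemma \ref{l&} together with the admissibility identity (\ref{cpsish}), and use Plancherel's theorem (\ref{Ptsh}) for the right-hand side. Your added remarks on the density upgrade from $\mathcal{S}(\mathbb{R}^{2n},\mathbb{H})$ and the sign/Fubini handling of the logarithmic weights address points the paper passes over silently, but they do not change the argument.
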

\begin{proof}
We have\ \\ \ \\
$\displaystyle\iiint_{\mathbb{R}\times\mathbb{R}^{2n-1}\mathbb{R}^{2n}} ln|w| |\mathcal{F}_{Q}(\mathcal{SH}_{\psi}^{Q}(a,s,.))(w)|^{2} \dfrac{da\ ds\ dw}{|a|^{2n+1}}$
\begin{eqnarray*}
&=& |a|^{2-\frac{1}{2n}} \displaystyle\iiint_{\mathbb{R}\times\mathbb{R}^{2n-1}\mathbb{R}^{2n}} ln|w|\  |\mathcal{F}_{Q}(f)(w)|^{2}|\mathcal{F}_{Q}(\psi^{*})(A_{a}^{T}S_{s}^{T}w)|^{2} \dfrac{da\ ds\ dw}{|a|^{2n+1}}\\
&=& \displaystyle\int_{\mathbb{R}^{2n}} ln|w|\  |\mathcal{F}_{Q}(f)(w)|^{2} \displaystyle\iint_{\mathbb{R}\times\mathbb{R}^{2n-1}} |\mathcal{F}_{Q}(\psi^{*})(A_{a}^{T}S_{s}^{T}w)|^{2} \dfrac{da\ ds}{|a|^{\frac{4n^{2}-2n+1}{2n}}}\ dw\\
&=& C_{\psi} \ \displaystyle\int_{\mathbb{R}^{2n}} ln|w|\  |\mathcal{F}_{Q}(f)(w)|^{2} \ dw.
\end{eqnarray*}
We may replace $f$ by $\mathcal{SH}_{\psi}^{Q}$ on both sides of (\ref{loga}) and get
\begin{equation*}
\displaystyle\int_{\mathbb{R}^{2n}} ln|t| |\mathcal{SH}_{\psi}^{Q}f(a,s,t)|^{2}dt + \displaystyle\int_{\mathbb{R}^{2n}} ln|w| |\mathcal{F}_{Q}(\mathcal{SH}_{\psi}^{Q}f(a,s,.))(w)|^{2}dw \geq D_{2n} \displaystyle\int_{\mathbb{R}^{2n}}|\mathcal{SH}_{\psi}^{Q}f(a,s,t)|^{2}dt,
\end{equation*}
integrationg both sided of this equation with respect to $\dfrac{da\ ds}{|a|^{2n+1}}$
 yields\ \\ \ \\
 $\displaystyle\iiint_{\mathbb{R}\times\mathbb{R}^{2n-1}\mathbb{R}^{2n}} ln|t| |\mathcal{SH}_{\psi}^{Q}f(a,s,t)|^{2}d\mu(a,s,t) + C_{\psi} \displaystyle\int_{\mathbb{R}^{2n}} ln|w| |\mathcal{F}_{Q}(f)(w)|^{2}dw $
 \begin{equation*}
\geq D_{2n} \displaystyle\iiint_{\mathbb{R}\times\mathbb{R}^{2n-1}\mathbb{R}^{2n}}|\mathcal{SH}_{\psi}^{Q}f(a,s,t)|^{2}d\mu(a,s,t),
\end{equation*}
we obtain
\begin{equation*}
\displaystyle\iiint_{\mathbb{R}\times\mathbb{R}^{2n-1}\mathbb{R}^{2n}} ln|t| |\mathcal{SH}_{\psi}^{Q}f(a,s,t)|^{2}d\mu(a,s,t) + C_{\psi} \displaystyle\int_{\mathbb{R}^{2n}} ln|w| |\mathcal{F}_{Q}(f)(w)|^{2}dw  \geq D_{2n}\  C_{\psi}\ \|f\|_{2}^{2}.
\end{equation*}
\end{proof}
\begin{corollary}
Let $\psi$ be an admissible quaternion  shearlet in $L^2(\mathbb{R}^{2n},\mathbb{H})$ satisfying the assumption of theorem \ref{conditions}. Then there exists $D_{2d} > 0$ such that for every $f\in L^2(\mathbb{R}^{2n},\mathbb{H})$, we have
\begin{equation}
\displaystyle\iiint_{\mathbb{R}\times\mathbb{R}^{2n-1}\mathbb{R}^{2n}} ln|(a,s,t)| |\mathcal{SH}_{\psi}^{Q}f(a,s,t)|^{2}d\mu(a,s,t) + C_{\psi} \displaystyle\int_{\mathbb{R}^{2n}} ln|w| |\mathcal{F}_{Q}(f)(w)|^{2}dw  \geq D_{2n}\  C_{\psi}\ \|f\|_{2}^{2}.
\end{equation}
where $D_{2n}$ is given by (\ref{Den}) and $|(a,s,t)| = \sqrt{a^{2} + |s|^{2} + |t|^{2}}\geq |t|$ .
\end{corollary}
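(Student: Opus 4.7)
The plan is to derive the corollary directly from the preceding theorem by a pointwise monotonicity argument on the logarithm, with no further analytic work required.

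First I would observe that for every $(a,s,t)\in \mathbb{R}^{*}\times\mathbb{R}^{2n-1}\times\mathbb{R}^{2n}$ we have
$$|(a,s,t)| = \sqrt{a^{2}+|s|^{2}+|t|^{2}} \geq |t|.$$
Since $a\neq 0$ on the shearlet group, $|(a,s,t)|>0$ everywhere, so the quantity $\ln|(a,s,t)|$ is well-defined (and finite). Since $\ln$ is monotonically increasing, this inequality transfers to
$$\ln|(a,s,t)| \geq \ln|t|,$$
understood in the extended sense when $|t|=0$ (the right-hand side being $-\infty$, which does not affect the integral).

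Next, multiplying by the nonnegative weight $|\mathcal{SH}_{\psi}^{Q}f(a,s,t)|^{2}$ and integrating against the left Haar measure $d\mu(a,s,t)$ preserves the inequality, giving
$$\iiint_{\mathbb{R}\times\mathbb{R}^{2n-1}\times\mathbb{R}^{2n}} \ln|(a,s,t)|\,|\mathcal{SH}_{\psi}^{Q}f(a,s,t)|^{2}\,d\mu(a,s,t) \ \geq \ \iiint_{\mathbb{R}\times\mathbb{R}^{2n-1}\times\mathbb{R}^{2n}} \ln|t|\,|\mathcal{SH}_{\psi}^{Q}f(a,s,t)|^{2}\,d\mu(a,s,t).$$

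Finally, adding the nonnegative term $C_{\psi}\int_{\mathbb{R}^{2n}}\ln|w|\,|\mathcal{F}_{Q}(f)(w)|^{2}\,dw$ to both sides and applying the logarithmic uncertainty principle of the preceding theorem to the right-hand side yields the required bound $D_{2n} C_{\psi} \|f\|_{2}^{2}$. There is no genuine obstacle here; the only mild technical point is ensuring the $\ln|t|$-integral is meaningful (the set $\{t=0\}$ has measure zero, so the comparison is valid almost everywhere), which is precisely what allows the preceding theorem to be invoked without modification.
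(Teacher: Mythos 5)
Your argument is exactly the one the paper intends: the corollary is stated with the hint $|(a,s,t)|=\sqrt{a^{2}+|s|^{2}+|t|^{2}}\geq|t|$, and the proof is just monotonicity of $\ln$, integration against the nonnegative weight $|\mathcal{SH}_{\psi}^{Q}f(a,s,t)|^{2}\,d\mu$, and an appeal to the preceding logarithmic uncertainty theorem, precisely as you do. One small wording slip: the term $C_{\psi}\int_{\mathbb{R}^{2n}}\ln|w|\,|\mathcal{F}_{Q}(f)(w)|^{2}\,dw$ need not be nonnegative (since $\ln|w|<0$ for $|w|<1$), but this is harmless because you add the same quantity to both sides, so the inequality is preserved regardless of its sign.
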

\subsection{The Beckner's uncertainty principle in terms of entropy for $\mathcal{SH}_{\psi}^{Q}$}
The entropy plays an important role in quantum mechanics and in signal theory, for a better understanding of its physical signification we refer the reader to \cite{defentropy2}. Clearly the entropy represents an advantageous way to measure the decay of a function $f$, so that it was very interesting to localize the entropy of a probability measure and its Fourier transform.\\
The aim of the following is to generalize the localization  of the entropy to the continuous quaternion shearlet transform over the quaternion shearlet plane .
\begin{definition} (Entropie)\cite{defentropy}\\
\mbox{}\\
The entropy of a probability density function $P$ on $\mathbb{R}^{*}\times\mathbb{R}^{2d-1}\times\mathbb{R}^{2d}$ is defined by
$$E(P) = - \displaystyle\iiint_{\mathbb{R}^{*}\times\mathbb{R}^{2d-1}\times\mathbb{R}^{2d}} ln(P(a,s,t)) P(a,s,t)\ d\mu(a,s,t).$$
\end{definition}
The aim of the following is to generalize the localization  of the entropy to the $\mathcal{SH}_{\psi}^{Q}$ over the quaternion shearlet plane .
\begin{theorem}
Let $\psi$ be a non zero multivariate admissible quaternion shearlet in $L^{2}(\mathbb{R}^{2n},\mathbb{H})$ satisfying the assumption of theorem \ref{conditions}. For every function $f$ in $L^2(\mathbb{R}^{2n},\mathbb{H})$ such that $f\neq 0$, we have
\begin{equation}\label{10}
E(|\mathcal{SH}_{\psi}^{Q}(f)|^{2}) \geq C_{\psi} \|f\|_{2}^{2}\ \  ln\bigg(\dfrac{1}{\|f\|_{2}^{2}\ \|\psi\|_{2}^{2}}\bigg) .
\end{equation}
\end{theorem}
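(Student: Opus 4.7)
The strategy is the classical Beckner/Shannon differentiation trick, applied to the Lieb inequality just proved. Taking $\varphi=\psi$ and $g=f$ in the Lieb inequality for $\mathcal{SH}_{\psi}^{Q}$ and raising both sides to the power $p$, I obtain for every $p\geq 1$
$$\Phi(p):=\iiint_{\mathbb{R}\times\mathbb{R}^{2n-1}\times\mathbb{R}^{2n}}\big|\mathcal{SH}_{\psi}^{Q}(f)(a,s,t)\big|^{2p}\,d\mu(a,s,t)\;\leq\;C_{\psi}\,\|f\|_{2}^{2p}\,\|\psi\|_{2}^{2(p-1)}=:\Theta(p).$$
By Plancherel's theorem for $\mathcal{SH}_{\psi}^{Q}$ (formula (\ref{Ptsh})), we have $\Phi(1)=C_{\psi}\|f\|_{2}^{2}=\Theta(1)$, so equality is attained at $p=1$.

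Define $\Psi(p):=\Theta(p)-\Phi(p)$. Then $\Psi(p)\geq 0$ on $[1,+\infty)$ and $\Psi(1)=0$, so $\Psi'(1^{+})\geq 0$. The next step is to compute this right-derivative explicitly. Differentiating under the integral sign,
$$\Theta'(p)=2C_{\psi}\,\|f\|_{2}^{2p}\,\|\psi\|_{2}^{2(p-1)}\,\ln\!\big(\|f\|_{2}\,\|\psi\|_{2}\big),$$
$$\Phi'(p)=\iiint\big|\mathcal{SH}_{\psi}^{Q}(f)\big|^{2p}\,\ln\!\big|\mathcal{SH}_{\psi}^{Q}(f)\big|^{2}\,d\mu.$$
Evaluating at $p=1$ and recognising $-\Phi'(1)=E(|\mathcal{SH}_{\psi}^{Q}(f)|^{2})$, the condition $\Psi'(1^{+})\geq 0$ rearranges to
$$E\!\big(|\mathcal{SH}_{\psi}^{Q}(f)|^{2}\big)\;\geq\;-C_{\psi}\,\|f\|_{2}^{2}\,\ln\!\big(\|f\|_{2}^{2}\,\|\psi\|_{2}^{2}\big)\;=\;C_{\psi}\,\|f\|_{2}^{2}\,\ln\!\left(\frac{1}{\|f\|_{2}^{2}\,\|\psi\|_{2}^{2}}\right),$$
which is exactly the claimed inequality.

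\textbf{Main obstacle.} The delicate point is the justification of differentiating $\Phi$ under the integral sign at $p=1$. The integrand $|\mathcal{SH}_{\psi}^{Q}(f)|^{2p}\ln|\mathcal{SH}_{\psi}^{Q}(f)|^{2}$ does not in general have a single integrable envelope valid on a full neighbourhood $[1,1+\varepsilon)$ without additional hypotheses on $f$ and $\psi$. The standard way to handle this is to split the domain according to whether $|\mathcal{SH}_{\psi}^{Q}(f)|\leq 1$ or $>1$: on the small-value set the bound $|x^{2p}\ln x^{2}|\leq |x^{2}\ln x^{2}|$ works uniformly, while on the large-value set one dominates by $|\mathcal{SH}_{\psi}^{Q}(f)|^{2p_{0}}$ for some $p_{0}>1$ (finiteness coming from the Lieb inequality at $p_{0}$). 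Once dominated convergence applies, one recovers $\Phi'(1^{+})$ and the argument above is complete; for a general $f\in L^{2}(\mathbb{R}^{2n},\mathbb{H})$ one may first work on a dense subspace (e.g.\ Schwartz functions for which both sides are finite) and conclude by approximation, interpreting the inequality in the usual way when the entropy is $+\infty$.
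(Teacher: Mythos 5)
Your proposal is correct and takes essentially the same route as the paper: apply the Lieb inequality with $\varphi=\psi$, $g=f$, note that Plancherel's theorem makes it an equality at the endpoint exponent, and differentiate there, the derivative of the shearlet side being (up to sign and a harmless factor) the entropy, with the differentiation under the integral justified by dominated convergence and the case $E=+\infty$ handled trivially. The only minor difference is that you keep $\|f\|_{2}$, $\|\psi\|_{2}$ general and obtain the constant by differentiating the right-hand side $\Theta(p)$ directly, whereas the paper first normalizes $\|f\|_{2}=\|\psi\|_{2}=1$ (so that $|\mathcal{SH}_{\psi}^{Q}(f)|\le 1$ and the comparison function is the constant $C_{\psi}$, the domination coming from the elementary bound $0\le\frac{x^{2}-x^{p}}{p-2}\le -x^{2}\ln x$) and then recovers the general inequality by the rescaling $\mathcal{SH}_{\phi}^{Q}(g)=\mathcal{SH}_{\psi}^{Q}(f)/(\|f\|_{2}\|\psi\|_{2})$; your closing remark about passing through a dense subspace is unnecessary, since your splitting argument already covers the case $E<+\infty$ for arbitrary $f$.
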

\begin{proof}
Assume that $\|f\|_{2} = \|\psi\|_{2} = 1$, we deduce that
$$\forall (a,s,t) \in \mathbb{R}^{*}\times\mathbb{R}^{2d-1}\times\mathbb{R}^{2d}  \hspace{0.1 cm} , \hspace{0.1 cm} |\mathcal{SH}_{\psi}^{Q}(f)(a,s,t)| \leq \|\mathcal{SH}_{\psi}^{Q}(f)\|_{\infty,\mu} \leq \|f\|_{2} \|\psi\|_{2} = 1$$
then $ln (|\mathcal{SH}_{\psi}^{Q}(f)|) \leq 0$ in particular $E(\mathcal{SH}_{\psi}^{Q}(f))\geq 0$.\\
$\bullet$ Therefore if the entropy $E(\mathcal{SH}_{\psi}^{Q}(f)) = +\infty $ then the inequality (\ref{10}) hold trivially .\\
$\bullet$ Suppose now that the entropy $E(\mathcal{SH}_{\psi}^{Q}(f)) < +\infty $  and let $0 < x < 1$ and $H_{x}$ be the function defined on $]2,3]$ by
$$H_{x}(p) = \dfrac{x^{p} - x^{2}}{p - 2}$$
then
\begin{center}
$\forall p \in ]2,3]$, $\dfrac{d}{dp}H_{x}(p) = \dfrac{(p-2) x^{p} ln(x) - x^{p} + x^{2}}{(p-2)^{2}}$
\end{center}
The sign of $\dfrac{dH_{x}}{dp}$ is the same as that of the function $K_{x}(p) = (p-2) x^{p} ln(x) - x^{p} + x^{2}$.\\
For every $0 < x < 1$, the function $K_{x}$ is differentiable on $\mathbb{R}$, especially on $]2,3]$,\\
 whose derivative is
$$\dfrac{d}{dp}K_{x}(p) = (p-2) (ln(x))^{2} x^{p} .$$
We have for all $0< x < 1$, $\dfrac{d}{dp}K_{x}(p)$ is positive on $]2,3]$, then $K_{x}$ is increasing on $]2,3]$.\\
For all $0<x<1$, $\displaystyle\lim_{p \longmapsto 2^{+}} K_{x}(p) = K_{x}(2) = 0$ then $K_{x}$ is positive which implies that $\dfrac{dH_{x}}{dp}$ is positive also on $]2,3]$ and consequently $p \mapsto H_{x}(p)$  is increasing on $]2,3]$.
In particular,
\begin{center}
$\forall p \in ]2,3]$, \hspace{0.5 cm} $x^{2} ln(x) = \displaystyle\lim_{p\longrightarrow 2^{+}} H_{x}(p) \leq \dfrac{x^{p} - x^{2}}{p - 2}$
\end{center}
hence
\begin{equation}\label{11}
\forall p \in ]2,3], \hspace{0.5 cm} 0 \leq \dfrac{ x^{2} - x^{p}}{p - 2} \leq -x^{2} ln(x).
\end{equation}
Inequality (\ref{11}) rest true for $x = 0$ and $x = 1$. Hence for every $0 \leq x \leq 1$ we have
$$\forall p \in ]2,3], \hspace{0.5 cm} 0 \leq \dfrac{x^{2} - x^{p}}{p - 2} \leq -x^{2} ln(x).$$
$ \forall (a,b) \in \mathbb{R}^{*}\times\mathbb{R}\times\mathbb{R}^{2}$, $0\leq |\mathcal{SH}_{\psi}^{Q}(f)(a,s,t)|^{2} \leq 1$, we get for every $p \in ]2,3]$
\begin{equation}\label{12}
0 \leq \dfrac{|\mathcal{SH}_{\psi}^{Q}(f)(a,s,t)|^{2} - \mathcal{SH}_{\psi}^{Q}(f)(a,s,t)|^{p}}{p - 2} \leq - |\mathcal{SH}_{\psi}^{Q}(f)(a,s,t)|^{2} ln(|\mathcal{SH}_{\psi}^{Q}(f)(a,s,t)|).
\end{equation}
Let $\varphi$ be the function defined on $[2, +\infty [$ by
\begin{center}
$\varphi(p) = \bigg(\displaystyle\int_{\mathbb{R}} \displaystyle\int_{\mathbb{R}^{2d-1}} \displaystyle\int_{\mathbb{R}^{2d}} |\mathcal{SH}_{\psi}^{Q}(f)(a,s,t)|^{p} d\mu(a,s,t)\bigg) - C_{\psi} .$
\end{center}
According to Lieb inequality, we know that for every $2 \leq p < +\infty$, $\mathcal{SH}_{\psi}^{Q}(f)$ belongs to $L^{p}(\mathbb{R}^{*}\times\mathbb{R}^{2d-1}\times\mathbb{R}^{2d},\mathbb{H})$ and we have
\begin{equation}\label{13}
\| \mathcal{SH}_{\psi}^{Q}(f)\|_{p,\mu}^{p} \leq C_{\psi} \|f\|_{2}^{p} \|\psi\|_{2}^{p-2}
\end{equation}
Then, relation (\ref{13}) implies that $\varphi(p)\leq 0$ for every $p \in [2, +\infty[$ and by Plancherel theorem we have
\begin{eqnarray*}
\varphi(2) & = & \bigg(\displaystyle\int_{\mathbb{R}} \displaystyle\int_{\mathbb{R}^{2d-1}} \displaystyle\int_{\mathbb{R}^{2d}}|\mathcal{SH}_{\psi}^{Q}(f)(a,s,t)|^{2} d\mu(a,s,t)\bigg) - C_{\psi}\\
& = & \|\mathcal{SH}_{\psi}^{Q}(f)(a,s,t)\|_{2,\mu}^{2} - C_{\psi}\\
& = &  C_{\psi} \|f\|_{2}^{2} - C_{\psi}\\
& = & 0 .
\end{eqnarray*}
Therefore $\bigg(\dfrac{d\varphi}{dp}\bigg)_{p = 2^{+}} \leq 0$ whenever this derivative is well defined. On the other hand, we have for every $p \in ]2,3]$ and for $(a,s,t) \in \mathbb{R}^{*}\times\mathbb{R}^{2d-1}\times\mathbb{R}^{2d}$
\begin{center}
$\bigg|\dfrac{|\mathcal{SH}_{\psi}^{Q}(f)(a,s,t)|^{p} - |\mathcal{SH}_{\psi}^{Q}(f)(a,s,t)|^{2} }{p-2}\bigg| \leq - |\mathcal{SH}_{\psi}^{Q}(f)(a,s,t)|^{2} ln(|\mathcal{SH}_{\psi}^{Q}(f)(a,s,t)|)$.
\end{center}
then\\
$\displaystyle\int_{\mathbb{R}} \displaystyle\int_{\mathbb{R}^{2d-1}} \displaystyle\int_{\mathbb{R}^{2d}}\bigg|\dfrac{|\mathcal{SH}_{\psi}^{Q}(f)(a,s,t)|^{p} - |\mathcal{SH}_{\psi}^{Q}(f)(a,s,t)|^{2} }{p-2}\bigg| d\mu(a,s,t)$
$$\begin{tabular}{lll}
 & $\leq$ & $-\displaystyle\int_{\mathbb{R}} \displaystyle\int_{\mathbb{R}^{2d-1}} \displaystyle\int_{\mathbb{R}^{2d}}|\mathcal{SH}_{\psi}^{Q}(f)(a,s,t)|^{2} ln(|\mathcal{SH}_{\psi}^{Q}(f)(a,s,t)|)d\mu(a,s,t)$\\
& $=$ & $-\dfrac{1}{2} \displaystyle\int_{\mathbb{R}} \displaystyle\int_{\mathbb{R}^{2d-1}} \displaystyle\int_{\mathbb{R}^{2d}} |\mathcal{SH}_{\psi}^{Q}(f)(a,s,t)|^{2} ln(|\mathcal{SH}_{\psi}^{Q}(f)(a,s,t)|^{2})d\mu(a,s,t)$\\
& $=$ & $\ \ \ \dfrac{1}{2} E(|\mathcal{SH}_{\psi}^{Q}(f)(a,s,t)|^{2}) < +\infty.$
\end{tabular}$$
Moreover , for every $p \in ]3, +\infty[$ and for every $(a,s,t) \in \mathbb{R}^{*}\times\mathbb{R}^{2d-1}\times\mathbb{R}^{2d}$, then
$$\bigg|\dfrac{|\mathcal{SH}_{\psi}^{Q}(f)(a,s,t)|^{p} - |\mathcal{SH}_{\psi}^{Q}(f)(a,s,t)|^{2} }{p-2}\bigg| \leq 2|\mathcal{SH}_{\psi}^{Q}(f)(a,s,t)|^{2}$$
and consequently
\begin{equation*}
\displaystyle\int_{\mathbb{R}} \displaystyle\int_{\mathbb{R}^{2d-1}} \displaystyle\int_{\mathbb{R}^{2d}}\bigg|\dfrac{|\mathcal{SH}_{\psi}^{Q}(f)(a,s,t)|^{p} - |\mathcal{SH}_{\psi}^{Q}(f)(a,s,t)|^{2} }{p-2}\bigg| d\mu(a,s,t) \leq 2\|\mathcal{SH}_{\psi}^{Q}(f)\|^{2}_{2,G} = 2. C_{\psi} < +\infty\\
\end{equation*}
However, by using relation (\ref{12}) and Lebesgue's dominated convergence theorem we have\\
$\bigg(\dfrac{d}{dp}\displaystyle\int_{\mathbb{R}} \displaystyle\int_{\mathbb{R}^{2d-1}} \displaystyle\int_{\mathbb{R}^{2d}} |\mathcal{SH}_{\psi}^{Q}(f)(a,s,t)|^{p} d\mu(a,s,t)\bigg)_{p=2^{+}} $
$$\begin{tabular}{lll}
& $=$ & $ \displaystyle\lim_{p\longrightarrow2^{+}} \displaystyle\int_{\mathbb{R}} \displaystyle\int_{\mathbb{R}^{2d-1}} \displaystyle\int_{\mathbb{R}^{2d}} \dfrac{|\mathcal{SH}_{\psi}^{Q}(f)(a,s,t)|^{p}-|\mathcal{SH}_{\psi}^{Q}(f)(a,s,t)|^{2}}{p-2}d\mu(a,s,t) $\\
& $=$ & $ \displaystyle\int_{\mathbb{R}} \displaystyle\int_{\mathbb{R}^{2d-1}} \displaystyle\int_{\mathbb{R}^{2d}} \displaystyle\lim_{p\longrightarrow2^{+}}\dfrac{|\mathcal{SH}_{\psi}^{Q}(f)(a,s,t)|^{p}-|\mathcal{SH}_{\psi}^{Q}(f)(a,s,t)|^{2}}{p-2}d\mu(a,s,t) $\\
& $=$ & $ \displaystyle\int_{\mathbb{R}} \displaystyle\int_{\mathbb{R}^{2d-1}} \displaystyle\int_{\mathbb{R}^{2d}} ln(|\mathcal{SH}_{\psi}^{Q}(f)(a,s,t)|) |\mathcal{SH}_{\psi}^{Q}(f)(a,s,t)|^{2} d\mu(a,s,t)$\\
& $=$ & $ \dfrac{1}{2} \displaystyle\int_{\mathbb{R}} \displaystyle\int_{\mathbb{R}^{2d-1}} \displaystyle\int_{\mathbb{R}^{2d}} ln(|\mathcal{SH}_{\psi}^{Q}(f)(a,s,t)|^{2}) |\mathcal{SH}_{\psi}^{Q}(f)(a,s,t)|^{2} d\mu(a,s,t)$\\
& $=$  & $ - \dfrac{1}{2} E(|\mathcal{SH}_{\psi}^{Q}(f)|^{2})$,
\end{tabular}$$
and consequently
\begin{center}
$\bigg(\dfrac{d\varphi}{dp}\bigg)_{p=2^{+}} = - \dfrac{1}{2} E(|\mathcal{SH}_{\psi}^{Q}(f)|^{2}) - \bigg(\dfrac{d\big(C_{\psi}\big)}{dp}\bigg)_{p=2^{+}}$ .
\end{center}
then
\begin{center}
$\bigg(\dfrac{d\varphi}{dp}\bigg)_{p=2^{+}} = - \dfrac{1}{2} E(|\mathcal{SH}_{\psi}^{Q}(f)|^{2})  \leq 0$
\end{center}
which gives
\begin{center}
$E(|\mathcal{SH}_{\psi}^{Q}(f)|^{2}|) \geq 0.$
\end{center}
So (\ref{10}) is true for $\|f\|_{2} = \|\psi\|_{2} = 1$.\\
For generic, $f, \psi \neq 0$ and let $g = \dfrac{f}{\|f\|_{2}}$ and $\phi = \dfrac{\psi}{\|\psi\|_{2}}$ we get $\|g\|_{2} = \|\phi\|_{2} = 1$ and\\ $E(|\mathcal{SH}_{\phi}^{Q}(g)|^{2}) \geq 0$ . Since
$$\mathcal{SH}_{\phi}^{Q}(g) = \dfrac{\mathcal{SH}_{\psi}^{Q}(f)}{\|f\|_{2}\|\psi\|_{2}}$$
by combining relations Plancherel's formula (\ref{Ptsh}) and Fubini's Theorem we get
$$\begin{tabular}{lll}
\vspace{0.25 cm}
$E(|\mathcal{SH}_{\phi}^{Q}(g)|^{2})$ & $=$ & $- \displaystyle\int_{\mathbb{R}} \displaystyle\int_{\mathbb{R}^{2d-1}} \displaystyle\int_{\mathbb{R}^{2d}}\big(ln(|\mathcal{SH}_{\psi}^{Q}(f)(a,s,t)|^{2})-ln(\|f\|_{2}^{2}\|\psi\|_{2}^{2})\big) \dfrac{|\mathcal{SH}_{\psi}^{Q}(f)(a,s,t)|^{2}}{\|f\|_{2}^{2}\|\psi\|_{2}^{2}} d\mu(a,s,t)$\\
& $=$ & $\dfrac{E(|\mathcal{SH}_{\psi}^{Q}(f)|^{2}) + ln(\|f\|^{2}_{2}\|\psi\|_{2}^{2})\|\mathcal{SH}_{\psi}^{Q}(f)\|^{2}_{2}}{\|f\|^{2}_{2} \|\psi\|^{2}_{2}}$\\
& $=$ & $\dfrac{E(|\mathcal{SH}_{\psi}^{Q}(f)|^{2})}{\|f\|^{2}_{2} \|\psi\|^{2}_{2}} +\dfrac{C_{\psi}}{\|f\|_{2}^{2}} ln(\|f\|^{2}_{2} \|\psi\|^{2}_{2})$\\
& $\geq$ & $0.$
\end{tabular}$$
we deduce that
\begin{center}
$E(|\mathcal{SH}_{\psi}^{Q}(f)|^{2}) \geq C_{\psi}\|f\|_{2}^{2} \ ln\bigg(\dfrac{1}{\|f\|_{2}^{2}\|\psi\|_{2}^{2}}\bigg) .$
\end{center}
\end{proof}



\end{document}